\providecommand{\U}[1]{\protect\rule{.1in}{.1in}}
\providecommand{\U}[1]{\protect\rule{.1in}{.1in}}
\newtheorem{theorem}{Theorem}
\newtheorem{algo}{Algorithm}
\newtheorem{definition}{Definition}
\newtheorem{lemma}{Lemma}
\newtheorem{problem}{Problem}
\newtheorem{remark}{Remark}
\newenvironment{proof}[1][Proof]{\textbf{#1.} }{\ \rule{0.5em}{0.5em}}
\newacronym{rttp}{RTTP}{Radiation therapy treatment planning}
\newacronym{pvc}{PVC}{Percentage violation constraint}
\newacronym{dvc}{DVC}{Dose volume constraint}
\newacronym{moscfpp}{MOSCFPP}{Multiple-operator split common fixed point problem}
\newacronym{sip}{SIP}{Split inverse problem}
\newacronym{scfp}{SCFP}{Split convex feasibility problem}
\newacronym{imrt}{IMRT}{Intensity modulated radiation therapy}
\newacronym{msscfp}{MSSCFP}{Multiple sets split convex feasibility problem}
\newacronym{cmsscfp}{CMSSCFP}{Constrained multiple set split convex feasibility problem}
\newacronym{scfpp}{SCFPP}{Split common fixed points problem}
\newacronym{fne}{FNE}{Firmly nonexpansive}
\newacronym{sqne}{SQNE}{Strongly quasi-nonexpansive}
\newacronym{msa}{MSA}{Modular string averaging}
\newacronym{aca}{ACA}{Almost cyclic sequential algorithm}
\newacronym{cfp}{CFP}{Convex feasibility problem}
\newacronym{lfp}{LFP}{Linear feasibility problem}
\newacronym{oar}{OAR}{Organ at risk}
\newacronym{ptv}{PTV}{Planning target volume}
\newacronym{hdc}{HDC}{Hard dose constraint}
\newacronym{dvh}{DVH}{Dose-volume histogram}
\begin{document}

\begin{titlepage}

\title{\vspace{-0.7in}\textbf{Dynamic string-averaging CQ-methods for the split feasibility
problem with percentage violation constraints arising in radiation
therapy treatment planning}}
\author{Mark Brooke$^{1}\footnote{Corresponding author: mark.brooke@oncology.ox.ac.uk, cc: markdanielbrooke@gmail.com}$, Yair Censor$^{2}$ and Aviv Gibali$^{3}\bigskip$\\
$^{1}$Department of Oncology, University of Oxford, \\ Oxford OX3 7DQ, UK\\
\{mark.brooke@oncology.ox.ac.uk\} 
\\
$^{2}$Department of Mathematics, University of Haifa\\
Mt.\ Carmel, Haifa 3498838, Israel \\
\{yair@math.haifa.ac.il\} 
\\
$^{3}$Department of Mathematics, ORT Braude College \\
Karmiel 2161002, Israel \\
\{avivg@braude.ac.il\} 
}
\date{January 14, 2020. Revised: December 18, 2020.}

\maketitle
\begin{abstract}
In this paper we study a feasibility-seeking problem with percentage
violation constraints. These are additional constraints, that are
appended to an existing family of constraints, which single out certain
subsets of the existing constraints and declare that up to a specified
fraction of the number of constraints in each subset is allowed to
be violated by up to a specified percentage of the existing bounds.
Our motivation to investigate problems with percentage violation constraints
comes from the field of radiation therapy treatment planning wherein
the fully-discretized inverse planning problem is formulated as a
split feasibility problem and the percentage violation constraints
give rise to non-convex constraints. Following the CQ algorithm of Byrne (2002, \textit{Inverse Problems}, Vol. \textbf{18}, pp. 441--53), we develop a string-averaging
CQ method that uses only projections onto the individual sets which
are half-spaces represented by linear inequalities. The question of
extending our theoretical results to the non-convex sets case is still
open. We describe how our results apply to radiation therapy treatment
planning and provide a numerical example.
\end{abstract}
\textbf{Keywords}: String-averaging, CQ-algorithm, split feasibility,
percentage violation constraints, radiation therapy treatment planning,
dose-volume constraints, common fixed points, cutter operator.

\begin{textblock*}{19cm}(1.2cm,25.3cm) 
\scriptsize\textcolor{gray}{
This is the peer reviewed version of the following article: Brooke, M., Censor, Y. and Gibali, A. (2021), Dynamic string-averaging CQ-methods for the split feasibility problem with percentage violation constraints arising in radiation therapy treatment planning. \textit{Intl. Trans. in Op. Res.}, which has been published in final form at \url{https://doi.org/10.1111/itor.12929}. This article may be used for non-commercial purposes in accordance with Wiley Terms and Conditions for Use of Self-Archived Versions.}
\end{textblock*}

\end{titlepage}

\section{Introduction\label{sect:intro}}

\subsection{Motivation\label{subsec:motivation}}

In this work we are motivated by a linear split feasibility problem
with percentage violation constraints arising in radiation therapy
treatment planning. We first provide the background in general
terms.

\textbf{Inverse radiation therapy treatment planning (RTTP)}. This
problem, in its fully-discretized modeling approach, leads to a linear
feasibility problem. This is a system of linear interval inequalities
\begin{equation}
c\leq Ax\leq b,\label{eq:interval}
\end{equation}
wherein the ``dose matrix''\ $A$ is pre-calculated by techniques
called in RTTP ``forward calculation''\ or ``forward planning''\ and
the vector $x$ is the unknown vector of ``intensities''\ that,
when used in setting up the treatment machine, will realize this specific
``treatment plan''. The vectors $b$ and $c$ contain upper and
lower bounds on the total dose $Ax$ permitted and required in volume
elements (voxels) of sensitive organs/tissues and target areas, respectively,
inside the irradiated body. The components of $b$ and $c$ are prescribed
by the attending oncologist and given to the treatment planner.

\textbf{Percentage violation constraints (PVCs)}. In general terms,
these are additional constraints that are appended to an existing
family of constraints. They single out certain subsets of the existing
constraints and declare that up to a specified fraction of the number
of constraints in each subset is allowed to be violated by up to a
specified percentage of the existing bounds. Such PVCs are useful
in the inverse problem of RTTP, mentioned above, where they are called
``dose volume constraints'' (DVCs). When the system of linear interval
inequalities is inconsistent, that is, there is no solution vector
that satisfies all inequalities, the DVCs allow the oncologist and
the planner to relax the original constraints in a controlled manner
to achieve consistency and find a solution.

\textbf{Split feasibility}. PVCs are, by their very nature, integer
constraints which change the feasibility problem to which they are
attached from being a continuous feasibility problem into becoming
a mixed integer feasibility problem. An alternative to the latter
is to translate the PVCs into constraints sets that are appended to
the original system of linear interval inequalities but are formulated
on the vectors $Ax,$ rather than directly on $x.$ This gives rise
to a ``split feasibility problem''\ which is split between two
spaces: the space of ``intensity vectors''\ $x$ and the space
of ``dose vectors''\ $d:=Ax$ in which $A$ is the operator mapping
one space onto the other.

\textbf{Non-convexity}. The constraints sets, that arise from the
PVCs, in the space of ``dose vectors''\ are non-convex sets but,
due to their special form enable the calculation of orthogonal projections
of points onto them. This opens the door for applying our proposed
dynamic string-averaging CQ-method to the RTTP inverse problem with
PVCs. Mathematical analysis for the case of non-convex sets remains
an open question. Looking at it from the practical point of view one
may consider also alternatives such as reformulating PVCs as $\ell_{1}$-norm
constraints. See, for example, \citet{candes08,kim13}.

\textbf{Group-structure of constraints}. Each row in the system (\ref{eq:interval})
represents a constraint on a single voxel. Lumping together constraints
of voxels, according to the organ/tissue to which they belong, divides
the matrix $A$ and the whole system into ``groups''\ of constraints,
referred to below as ``blocks of constraints''\ in a natural manner.
These groups affect the formulation of the split feasibility problem
at hand by demanding that the space of intensity vectors\ $x$ be
mapped separately by each group of rows of the matrix $A$ into another
space of dose vectors\ $d.$

\subsection{Contribution\label{subsec:contrib}}

Motivated by the above we deal in this paper with the ``multiple-operator
split common fixed point problem''\ (MOSCFPP) defined next.

\begin{problem} \label{prob:moscfpp}\textbf{The multiple-operator
split common fixed point problem\ (MOSCFPP)}. Let $\mathcal{H}$
and $\mathcal{K}$ be two real Hilbert spaces, and let $r$ and $p$
be two natural numbers. Let $U_{i}:\mathcal{H}\rightarrow\mathcal{H}$,
$1\leq i\leq p,$ and $T_{j}:\mathcal{K}\rightarrow\mathcal{K},$
$1\leq j\leq r,$ be given operators with nonempty fixed point sets
$\text{\ensuremath{\mathrm{Fix}}}\left(U_{i}\right)$ and $\text{\ensuremath{\mathrm{Fix}}}\left(T_{j}\right)$,
respectively. Further, let $A_{j}:\mathcal{H}\rightarrow\mathcal{K},$
for all $1\leq j\leq r,$ be given bounded linear operators. In addition
let{{} $\Phi$} be another closed and convex subset
of $\mathcal{H}$. The MOSCFPP is:
\begin{align}
{\text{\text{Find an }\ensuremath{x^{\ast}\in\Phi\text{ such that }}}}x^{\ast}\in\cap_{i=1}^{p}\text{\ensuremath{\mathrm{Fix}}}(U_{i})\text{ and, }\\
\text{for all }1 & \leq j\leq r,\text{ }A_{j}x^{\ast}\in\text{\ensuremath{\mathrm{Fix}}}(T_{j}).\label{eq:prob1-line2}
\end{align}

\end{problem}

This problem formulation unifies several existing ``split\ problems''\ formulations
and, to the best of our knowledge, has not been formulated before.
We analyze it and propose a ``dynamic string-averaging CQ-method''\ to
solve it, based on techniques used in some of those earlier formulations.
We show in detail how this problem covers and applies to the linear
split feasibility problem with DVCs in RTTP. Our convergence results
about the dynamic string-averaging CQ-algorithm presented here rely
on convexity assumptions. Therefore, there remains an open question
whether our work can be expanded to cover the case of the non-convex
constraints in the space of dose vectors\ $d$ used in RTTP$.$ Recent
work in the field report on strides made in the field of projection
methods when the underlying sets are non-convex, {see,
for example, \citet{hln14,bpw14,abs13}}. This encourages us to expand
the results presented here in the same way.

\subsection{Structure of the paper\label{subsec:structure}}

We begin by briefly reviewing relevant ``split problem'' formulations
which have led to our proposed MOSCFPP and a ``dynamic string-averaging
CQ-method'' to solve it. Starting from a general formulation of two
concurrent inverse problems in different vector spaces connected by
a bounded linear operator, we explore the inclusion of multiple convex
constraint sets within each vector space. Defining operators that
act on each of these sets allows us to formulate equivalent fixed
point problems, which naturally leads to our MOSCFPP. We then provide
some insight into how one may solve such a problem, using constrained
minimization, or successive metric projections as part of a CQ-type
method \citep{byrne1}. These projection methods form the basis of
our ``dynamic string-averaging CQ-method'', which is introduced
in Section \ref{Sec:Method}. Important mathematical foundations for
this method are provided in Section \ref{Sec:Prelims}, which serve
to describe the conditions under which the method converges to a solution
in Section \ref{subsec:convergence}. Finally, we bring percentage
violation constraints (PVCs) into our problem formulation (Section
\ref{sect:PVC}) and consolidate our work by providing examples of
how the MOSCFPP and ``dynamic string-averaging CQ-method'' may be
applied in RTTP (Section \ref{sec:application}). {A numerical example is provided on a synthetically created treatment plan, detailed in Section \ref{sec:numerical}}.

{An important comment must be made here. The introduction
of a new mathematical model for an application naturally calls for
simulated numerical validation, particularly when a new algorithm
is proposed. Here we present a rudimentary numerical example since more complex clinically-relevant treatment plans rely heavily on the medical physics context of the radiation therapy treatment planning problem. As such, they call for evaluation of the results in the context of the radiation therapy treatment planning problem itself and require a dedicated proper background and framework which are outside the scope of this
paper. An extensive analysis of the methods presented in this paper, on a number of clinical treatment plans, will be published in an appropriate medical physics journal.}

\section{A brief review of ``split\ problems''\ formulations and solution
methods\label{sec:review}}

The following brief review of ``split\ problems''\ formulations
and solution methods will help put our work in context. The review
is non-exhaustive and focuses only on split\ problems that led to
our new formulation that appears in Problem \ref{prob:moscfpp}. Other
split problems such as {``the common solution of the split variational inequality problems and fixed point problems''}\ \citep[see, e.g.,][]{Lohawech18}
or ``split Nash equilibrium problems for non-cooperative strategic
games''\ \citep[see, e.g.,][]{Jinlu-Li-19} and many others are
not included here. The ``split inverse problem''\ (SIP), which
was introduced by \citet{cgr12} \citep[see also][]{bcgr12}, is formulated
as follows.

\begin{problem} \label{prob:split-inverse}\textbf{The split inverse
problem\ (SIP)}. Given are two vector spaces $X$ and $Y$ and a
bounded linear operator $A:X\rightarrow Y$. In addition, two inverse
problems are involved. The first one, denoted by IP$_{1}$, is formulated
in the space $X$ and the second one, denoted by IP$_{2}$, is formulated
in the space $Y$. The SIP is:
\begin{equation}
\text{Find an }x^{\ast}\in X\text{ that solves IP}_{1}\text{ such that }y^{\ast}:=Ax^{\ast}\in Y\text{ solves IP}_{2}\text{.}
\end{equation}

\end{problem}

The first published instance of a SIP is the ``split convex feasibility
problem'' (SCFP) of \citet{CE94}, which is formulated as follows.

\begin{problem} \label{prob:split-feas}\textbf{The split convex
feasibility problem (SCFP)}. Let $\mathcal{H}$ and $\mathcal{K}$
be two real Hilbert spaces. Given are nonempty, closed and convex
sets $C\subseteq\mathcal{H}$ and $Q\subseteq\mathcal{K}$ and a bounded
linear operator $A:\mathcal{H}\rightarrow\mathcal{K}$. The SCFP is:
\begin{equation}
\text{Find an }\ x^{\ast}\in C\ \text{such that}\ Ax^{\ast}\in Q.\label{sfp}
\end{equation}

\end{problem}

This problem was employed, among others, for solving an inverse problem
in intensity-modulated radiation therapy (IMRT) treatment planning
{\citep[see][]{cbmt06,davidi15,CEKB2004}}. More results regarding the SCFP theory and algorithms, can be found, for example, in \citet{Yang04,lopez12,glt17},
and the references therein. The SCFP was extended in many directions
to Hilbert and Banach spaces formulations. It was extended also to
the following ``multiple sets split convex feasibility problem''
(MSSCFP).

\begin{problem} \label{prob:multisetscfp}\textbf{The multiple sets
split convex feasibility problem\ (MSSCFP)}. Let $\mathcal{H}$ and
$\mathcal{K}$ be two real Hilbert spaces and $r$ and $p$ be two
natural numbers. Given are sets $C_{i},$ $1\leq i\leq p$ and $Q_{j},$
$1\leq j\leq r,$ that are closed and convex subsets of $\mathcal{H}$
and $\mathcal{K}$, respectively, and a bounded linear operator $A:\mathcal{H}\rightarrow\mathcal{K}$.
The MSSCFP is:
\begin{equation}
\text{Find an }x^{\ast}\in\cap_{i=1}^{p}C_{i}\text{ such that }Ax^{\ast}\in\cap_{j=1}^{r}Q_{j}.\label{P:MSSCFP}
\end{equation}

\end{problem}

\citet{ms07} proposed the ``constrained multiple set split convex
feasibility problem'' (CMSSCFP) which is phrased as follows (see also
\citet{lve16}).

\begin{problem} \label{prob:massad}\textbf{The constrained multiple
set split convex feasibility problem (CMSSCFP)}. Let $\mathcal{H}$
and $\mathcal{K}$ be two real Hilbert spaces and $r$ and $p$ be
two natural numbers. Given are sets $C_{i},$ $1\leq i\leq p$ and
$Q_{j},$ $1\leq j\leq r,$ that are closed and convex subsets of
$\mathcal{H}$ and $\mathcal{K}$, respectively, and for $1\leq j\leq r$,
given bounded linear operators $A_{j}:\mathcal{H}\rightarrow\mathcal{K}$.
In addition let ${\Phi}$ be another closed and convex
subset of $\mathcal{H}$. The CMSSCFP is:
\begin{equation}
\text{Find an }x^{\ast}\in{\Phi}\text{ such that }x^{\ast}\in\cap_{i=1}^{p}C_{i}\text{ and }A_{j}x^{\ast}\in Q_{j},\text{ for }1\leq j\leq r.
\end{equation}

\end{problem}

Another extension, due to \citet{CS09}, is the following ``split
common fixed points problem'' (SCFPP).

\begin{problem} \label{prob:scommonfp}\textbf{The split common fixed
points problem (SCFPP)}. Let $\mathcal{H}$ and $\mathcal{K}$ be
two real Hilbert spaces and $r$ and $p$ be two natural numbers.
Given are operators $U_{i}:\mathcal{H}\rightarrow\mathcal{H}$, $1\leq i\leq p,$
and $T_{j}:\mathcal{K}\rightarrow\mathcal{K},$ $1\leq j\leq r,$
with nonempty fixed point sets $\mathrm{\mathrm{Fix}}\left(U_{i}\right),$
$1\leq i\leq p$ and $\mathrm{\mathrm{Fix}}\left(T_{j}\right)$, $1\leq j\leq r$,
respectively, and a bounded linear operator $A:\mathcal{H}\rightarrow\mathcal{K}$.
The SCFPP is:
\begin{equation}
\text{Find an }x^{\ast}\in\cap_{i=1}^{p}\mathrm{Fix}(U_{i})\text{ such that }Ax^{\ast}\in\cap_{j=1}^{r}\text{\ensuremath{\mathrm{Fix}}}(T_{j}).
\end{equation}

\end{problem}

Problems \ref{prob:split-feas}--\ref{prob:scommonfp} are SIPs but,
more importantly, they are special cases of our MOSCFPP of Problem
\ref{prob:moscfpp}.

Focusing in a telegraphic manner on algorithms for solving some of
the above SIPs, we observe that the SCFP of Problem \ref{prob:split-feas}
can be reformulated as the constrained minimization problem:
\begin{equation}
\min_{x\in C}\frac{1}{2}\Vert P_{Q}(Ax)-Ax\Vert^{2},\label{P:cm}
\end{equation}
where $P_{Q}$ is the orthogonal (metric) projection onto $Q$. {(Note that the term ``orthogonal projection" is used mainly for subspaces while the ``metric" projection refers to any kind of sets \citep[see, e.g., ][Section 2.2.4]{Cegielski12})}. Since the objective function is convex and continuously differentiable with Lipschitz continuous gradients, one can apply the projected gradient
method \citep[see, e.g.,][]{goldstein64} and obtain Byrne's well-known
\textit{CQ-algorithm} \citep{byrne1}. The iterative step of the CQ-algorithm
has the following structure:
\begin{equation}
x^{k+1}=P_{C}(x^{k}-\gamma A^{\star}(Id-P_{Q})Ax^{k}),
\end{equation}
where $A^{\star}$ stands for the adjoint ($A^{\star}$=$A^{T}$ transpose
in Euclidean spaces) of $A$, $\gamma$ is some positive number, $Id$
is the identity operator, and $P_{C}$ and $P_{Q}$ are the orthogonal
projections onto $C$ and $Q,$ respectively. For the MSSCFP of Problem
\ref{prob:multisetscfp}, the minimization model considered in \citet{CEKB2004},
is
\begin{equation}
\min_{x\in\mathbb{R}^{M}}\left(\sum_{i=1}^{p}\text{dist}^{2}(x,C)+\sum_{j=1}^{r}\text{dist}^{2}(Ax,Q)\right),\label{P:min-multi-scfp}
\end{equation}
leading, for example, to a gradient descent method which has an iterative
simultaneous projections nature:
\begin{equation}
x^{k+1}=x^{k}-\gamma\sum_{i=1}^{p}\alpha_{i}\left(Id-P_{C_{i}}\right)x^{k}+\sum_{j=1}^{r}\beta_{j}A^{\star}\left(Id-P_{Q_{j}}\right)Ax^{k},\label{MSSCFP:step}
\end{equation}
where $\gamma\in\left(0,\frac{2}{L}\right)$ with
\begin{equation}
L:=\sum_{i=1}^{p}\alpha_{i}+\sum_{j=1}^{r}\beta_{j}\Vert A\Vert_{F}^{2}
\end{equation}
where $\Vert A\Vert_{F}^{2}$ is the squared Frobenius norm of $A$.\\

Inspired by the above and the work presented in \citet{pzcbcs17},
we propose in the sequel a ``dynamic string-averaging CQ-method''\ for
solving the MOSCFPP of Problem \ref{prob:moscfpp}.

\section{Preliminaries\label{Sec:Prelims}}

Through this paper $\mathcal{H}$ and $\mathcal{K}$ are two real
Hilbert spaces and let $D\subset\mathcal{H}$. For every point $x\in\mathcal{H}$,
there exists a unique nearest point in $D$, denoted by $P_{D}(x)$
such that
\begin{equation}
\Vert x-P_{D}(x)\Vert\leq\Vert x-y\Vert,\ \text{for all\ }y\in D.
\end{equation}

The operator $P_{D}:\mathcal{H}\rightarrow\mathcal{H}$ is called
the \textit{metric projection} onto $D$.

\begin{definition} Let $T:\mathcal{H}\rightarrow\mathcal{H}$ be
an operator and $D\subset\mathcal{H}$.

(i) The operator $T$ is called \texttt{Lipschitz continuous} on $D$
with constant $L>0$ if
\begin{equation}
\Vert T(x)-T(y)\Vert\leq L\Vert x-y\Vert,\text{\ for all\ }x,y\in D.
\end{equation}

(ii) The operator $T$ is called \texttt{nonexpansive} on $D$ if
it is $1$-Lipschitz continuous.

(iii) The \texttt{Fixed Point set} of $T$ is
\begin{equation}
\mathrm{Fix}(T):=\{x\in\mathcal{H}\mid T(x)=x\}.
\end{equation}

(iv) The operator $T$ is called $c$-\texttt{averaged} ($c$-av)
\citep{bbr-1} if there exists a nonexpansive operator $N:D\rightarrow\mathcal{H}$
and a number $c\in(0,1)$ such that
\begin{equation}
T=(1-c)Id+cN.
\end{equation}
In this case we also say that $T$ is $c$-av \citep{byrne2}. If
two operators $T_{1}$ and $T_{2}$ are $c_{1}$-av and $c_{2}$-av,
respectively, then their composition $S=T_{1}T_{2}$ is $(c_{1}+c_{2}-c_{1}c_{2})$-av;
\citep[see][Lemma 2.2.]{byrne2}

(v) The operator $T$ is called $\nu$\texttt{-inverse strongly monotone}
($\nu$-ism) on $D$ if there exists a number $\nu>0$ such that
\begin{equation}
\langle T(x)-T(y),x-y\rangle\geq\nu\Vert T(x)-T(y)\Vert^{2},\text{ for all }x,y\in D.
\end{equation}

(vi) The operator $T$ is called \texttt{firmly nonexpansive} (FNE)
on $D$ if
\begin{equation}
\left\langle T(x)-T(y),x-y\right\rangle \geq\left\Vert T(x)-T(y)\right\Vert ^{2},\text{ for all }x,y\in D\text{,}
\end{equation}

A useful fact is that $T$ is firmly nonexpansive if and only if its
complement $Id-T$ is firmly nonexpansive. Moreover, $T$ is firmly
nonexpansive if and only if $T$ is $(1/2)$-av (see \citet[Proposition 11.2]{GR84}
and \citet[Lemma 2.3]{byrne2}). In addition, $T$ is averaged if
and only if its complement $Id-T$ is $\nu$-ism for some $\nu>1/2$;
\citep[see, e.g.,][Lemma 2.1]{byrne2}.\\

(vii) The operator $T$ is called \texttt{quasi-nonexpansive }(QNE)
\begin{equation}
\Vert T(x)-w\Vert\leq\Vert x-w\Vert\text{ for all }(x,w)\in\mathcal{H}\times\mathrm{Fix}(T)
\end{equation}

(viii) The operator $T$ is called is called a \texttt{cutter} (also
\texttt{firmly quasi-} \texttt{nonexpansive}) ($T\in\mathfrak{T}$)
if $\mathrm{Fix}(T)\neq\emptyset$ and
\begin{equation}
\left\langle T(x)-x,T(x)-w\right\rangle \leq0\text{\ for all }(x,w)\in\mathcal{H}\times\mathrm{Fix}(T).
\end{equation}

(ix) Let $\lambda\in\lbrack0,2]$, the operator $T_{\lambda}:=(1-\lambda)Id+\lambda T$
is called $\lambda$-\texttt{relaxation} of the operator $T$. With
respect to cutters above it is known that for $\lambda\in\lbrack0,1]$,
the $\lambda$-relaxation of a cutter is also a cutter \citep[see, e.g.,][Remark 2.1.32]{Cegielski12}.\\

(x) The operator $T$ {is called} $\rho$-\texttt{strongly
quasi-nonexpansive} ($\rho$-SQNE), where $\rho\geq0$, if $\mathrm{Fix}(T)\neq\emptyset$
and
\begin{equation}
\Vert T(x)-w\Vert\leq\Vert x-w\Vert-\rho\Vert T(x)-x\Vert,\text{\ for all }(x,w)\in\mathcal{H}\times\mathrm{Fix}(T).
\end{equation}
A useful fact is that a family of SQNE operators with non-empty intersection
of fixed point sets is closed under composition and convex combination
\citep[see, e.g.,][Corollary 2.1.47]{Cegielski12}.\\

(xi) The operator $T$ is called is called \texttt{demi-closed} at
$y\in\mathcal{H}$ if for any sequence $\left\{ x^{k}\right\} _{k=0}^{\infty}$
in $D$ such that $x^{k}\rightarrow\overline{x}\in D$ and $T(x^{k})\rightarrow y,$
we have $T(\overline{x})=y.$ \end{definition}

Next we recall the well-known \textit{Demi-closedness Principle} \citep{Browder}.

\begin{lemma} \label{Lem:DM} Let $\mathcal{H}$ be a Hilbert space,
$D$ a closed and convex subset of $\mathcal{H}$, and $N:D\rightarrow\mathcal{H}$
a nonexpansive operator. Then $Id-N$ ($Id$ is the identity operator
on $\mathcal{H}$) is \texttt{demi-closed} at $y\in\mathcal{H}.$
\end{lemma}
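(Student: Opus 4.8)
The plan is to write $T := I - N$ and reduce the claim to the single identity $\bar{x} - N\bar{x} = y$, since establishing $T(\bar{x}) = y$ is exactly establishing $N\bar{x} = \bar{x} - y$. Throughout, the operative hypothesis is that $x^{k} \rightharpoonup \bar{x}$ weakly while $(I-N)x^{k} \rightarrow y$ strongly; the nontrivial content of demiclosedness lies precisely in combining weak convergence of the arguments with strong convergence of the images (if both convergences were strong the statement would be immediate from continuity of $N$). Because $D$ is closed and convex it is weakly closed, so the weak limit $\bar{x}$ indeed lies in $D$ and $N\bar{x}$ is well defined; this is the first fact I would record.

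The key tool I would establish is Opial's inequality in the Hilbert space $\mathcal{H}$. For any $u \in \mathcal{H}$ I would expand
\begin{equation}
\Vert x^{k}-u\Vert^{2} = \Vert x^{k}-\bar{x}\Vert^{2} + 2\langle x^{k}-\bar{x},\,\bar{x}-u\rangle + \Vert\bar{x}-u\Vert^{2}.
\end{equation}
Since $x^{k} \rightharpoonup \bar{x}$, the cross term tends to $0$, so taking $\liminf$ yields
\begin{equation}
\liminf_{k}\Vert x^{k}-u\Vert^{2} = \liminf_{k}\Vert x^{k}-\bar{x}\Vert^{2} + \Vert\bar{x}-u\Vert^{2},
\end{equation}
whence $\liminf_{k}\Vert x^{k}-\bar{x}\Vert < \liminf_{k}\Vert x^{k}-u\Vert$ for every $u \neq \bar{x}$.

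The main estimate then sets $u := N\bar{x} + y$ and shows that the \emph{reverse} inequality $\liminf_{k}\Vert x^{k}-u\Vert \leq \liminf_{k}\Vert x^{k}-\bar{x}\Vert$ holds. Writing $\Vert x^{k}-u\Vert = \Vert(x^{k}-y) - N\bar{x}\Vert$ and inserting $N x^{k}$, I would bound
\begin{equation}
\Vert x^{k}-u\Vert \leq \Vert N x^{k} - N\bar{x}\Vert + \Vert x^{k}-y-N x^{k}\Vert \leq \Vert x^{k}-\bar{x}\Vert + \Vert(I-N)x^{k}-y\Vert,
\end{equation}
using nonexpansiveness of $N$ on the first summand. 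The second summand tends to $0$ by hypothesis, so passing to $\liminf$ gives the reverse inequality. Comparing it with Opial's strict inequality forces $u = \bar{x}$, that is $N\bar{x} + y = \bar{x}$, which is precisely $(I-N)\bar{x} = y$.

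I expect the only real subtlety, rather than a genuine obstacle, to be the bookkeeping around the two modes of convergence: one must be scrupulous not to treat $x^{k} \rightharpoonup \bar{x}$ as strong convergence (which would trivialize the statement and make the proof circular), and must route the strongly convergent term $(I-N)x^{k}-y$ through the triangle inequality while leaving the weakly convergent term to be controlled only by Opial's inequality. A secondary point worth verifying is that every quantity is legitimate, namely that $\bar{x}\in D$ by weak closedness of $D$ so that $N\bar{x}$ exists. With these in place the argument reduces to a routine combination of nonexpansiveness, the triangle inequality, and the $\liminf$ manipulation above.
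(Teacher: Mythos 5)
Your proof is correct, but there is no internal proof to compare it against: the paper merely \emph{recalls} this lemma as the well-known Demiclosedness Principle and cites Browder's 1965 paper, giving no argument of its own. What you have written is the classical Opial-type proof that underlies Browder's result, and it is complete: weak closedness of the closed convex set $D$ guarantees $\bar{x}\in D$ so that $N\bar{x}$ is defined; Opial's strict inequality $\liminf_{k}\Vert x^{k}-\bar{x}\Vert<\liminf_{k}\Vert x^{k}-u\Vert$ for all $u\neq\bar{x}$ follows from your expansion because the cross term vanishes under weak convergence; and the reverse estimate at the particular point $u:=N\bar{x}+y$, obtained from the triangle inequality, nonexpansiveness of $N$, and the strong convergence $(I-N)x^{k}\rightarrow y$, forces $u=\bar{x}$, i.e.\ $(I-N)\bar{x}=y$. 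The $\liminf$ manipulations are legitimate since weakly convergent sequences are bounded and the error terms tend to zero.

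One point worth making explicit: you read the hypothesis as weak convergence $x^{k}\rightharpoonup\bar{x}$ together with strong convergence $(I-N)x^{k}\rightarrow y$, which is the substantive form of the principle and the one the paper's convergence analysis actually relies on. The paper's Definition (xi) literally writes $x^{k}\rightarrow\bar{x}$ for the arguments, under which the lemma would be a triviality by continuity of $N$, as you yourself observe. Your reading matches Browder's theorem, so the interpretation is the right one; it simply highlights a notational looseness in the paper's definition rather than any defect in your argument.
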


Let $A:\mathcal{H}\rightarrow\mathcal{K}$ be a bounded linear operator
with $\|A\|>0$, and $C\subseteq\mathcal{H}$ and $Q\subseteq\mathcal{K}$
{be} nonempty, closed and convex sets. The operator $V:\mathcal{H}\rightarrow\mathcal{H}$
which is defined by
\begin{equation}
V:=Id-\frac{1}{\Vert A\Vert^{2}}A^{\star}(Id-P_{Q})A\label{Eq:Lan}
\end{equation}
is called a \textit{Landweber operator} and $U:\mathcal{H}\rightarrow\mathcal{H}$
defined by
\begin{equation}
U:=P_{C}{V}
\end{equation}
is called a \textit{projected Landweber operator} {with
$V$ as in (\ref{Eq:Lan})}. See, e.g., \citet{Cegielski12,cegielski15,cegielski14}.

In the general case where $T:\mathcal{H}\rightarrow\mathcal{H}$ is
quasi-nonexpansive and $A:\mathcal{H}\rightarrow\mathcal{K}$ is a
bounded and linear operator with $\Vert A\Vert>0$, {a
so-called }\textit{{Landweber-type operator}}{{}
\citep[see, e.g.,][]{cegielski14} is defined by}
\begin{equation}
V:=Id-\frac{1}{\Vert A\Vert^{2}}A^{\star}(Id-T)A.\label{Eq:LanType}
\end{equation}
{Note that (\ref{Eq:Lan}) is a special case of (\ref{Eq:LanType}),
since $P_{Q}$ is firmly nonexpansive, thus, quasi-nonexpansive.}

\section{The dynamic string-averaging CQ-method\label{Sec:Method}}

In this section we present our ``dynamic string-averaging CQ-method''\ for
solving the MOSCFPP of Problem \ref{prob:moscfpp}. It is actually
an algorithmic scheme which encompasses many specific algorithms that
are obtained from it by different choices of strings and weights.
First, for all $j=1,2,\ldots,r,$ construct from the given data of
Problem \ref{prob:moscfpp}, the operators $V_{j}:\mathcal{H}\rightarrow\mathcal{H}$
defined by
\begin{equation}
V_{j}:=Id-\gamma_{j}A_{j}^{\star}(Id-T_{j})A_{j},\label{eq:V_j}
\end{equation}
where $\gamma_{j}\in\left(0,\frac{1}{L_{j}}\right)$, $L_{j}=\Vert A_{j}\Vert^{2}$.
For quasi-nonexpansive $T_{j}$ this definition coincides with that
of ``Landweber-type operators related to $T_{j}$''\ of \citet[Definition 2]{cegielski14}
with a relaxation of $\gamma_{j}$.

For simplicity, and without loss of generality, we assume that $r=p$
in Problem \ref{prob:moscfpp}. This is not restrictive since if $r<p$
we will define $T_{j}:=Id$ for $r+1\leq j\leq p,$ and if $p<r$
we will define $U_{i}:=Id$ for $p+1\leq i\leq r,$ which, in both
cases, will not make any difference to the formulation of Problem
\ref{prob:moscfpp}.

Define $\Gamma:=\{1,2,\dots,p\}$ and for each $i\in\Gamma$ define
the operator $R_{i}:\mathcal{H}\rightarrow\mathcal{H}$ by$R_{i}:=U_{i}V_{i}.$
An \textit{index vector} is a vector $t=(t_{1},t_{2},\dots,t_{q})$
such that $t_{i}\in\Gamma$ for all $i=1,2,\dots,q$. For a given
index vector $t=(t_{1},t_{2},\dots,t_{q})$ we denote its \textit{length}
by $\ell(t):=q,$ and define the operator $Z[t]$ as the product of
the individual operators $R_{i}$ whose indices appear in the index
vector $t$, namely,
\begin{equation}
Z[t]:=R_{t_{\ell(t)}}R_{t_{\ell(t)-1}}\cdots R_{t_{1}},\label{eq:Zt}
\end{equation}
and call it a \textit{string operator}. A finite set $\Theta$ of
index vectors is called \textit{fit} if for each $i\in\Gamma$, there
exists a vector $t=(t_{1},t_{2},\dots,t_{q})\in\Theta$ such that
$t_{s}=i$ for some $s\in\Gamma$.

Denote by $\mathcal{M}$ the collection of all pairs $(\Theta,w)$,
where $\Theta$ is a fit finite set of index vectors and
\begin{equation}
w:\Theta\rightarrow(0,\infty)\text{ is such that }\sum_{t\in\Theta}w(t)=1.\label{eq:1.6}
\end{equation}

For any $(\Theta,w)\in\mathcal{M}$ define the convex combination
of the end-points of all strings defined by members of $\Theta$ by
\begin{equation}
{\Psi_{\Theta,w}(x)}:=\sum_{t\in\Theta}w(t)Z[t](x),\;x\in\mathcal{H}.\label{eq:1.7}
\end{equation}
We fix a number $\Delta\in(0,1/p)$ and an integer $\bar{q}\geq p$
and denote by $\mathcal{M}_{\ast}\equiv\mathcal{M}_{\ast}(\Delta,\bar{q})$
the set of all $(\Theta,w)\in\mathcal{M}$ such that the lengths of
the strings are bounded and the weights are all bounded away from
zero, namely,
\begin{equation}
\mathcal{M}_{\ast}:=\{(\Theta,w)\in\mathcal{M\mid}\text{ }\ell(t)\leq\bar{q}\text{ and }w(t)\geq\Delta\text{ for all }t\in\Theta\}.\label{eq:11516}
\end{equation}

The dynamic string-averaging CQ-method with variable strings and variable
weights is described by the following iterative process.

\begin{algo}\label{alg:sa-cq}\textbf{The dynamic string-averaging
CQ-method with variable strings and variable weights}

\textbf{Initialization}: Select an arbitrary $x^{0}\in\mathcal{H}$,

\textbf{Iterative step}: Given a current iteration vector $x^{k}$
pick a pair $(\Theta_{k},w_{k})\in\mathcal{M}_{\ast}$ and calculate
the next iteration vector by
\begin{equation}
x^{k+1}={\Psi}_{{\Theta_{k},w_{k}}}(x^{k})\text{.}\label{eq:algv}
\end{equation}
\end{algo}

The iterative step of (\ref{eq:algv}) amounts to calculating, for
all $t\in\Theta_{k},$ the strings' end-points
\begin{equation}
Z[t](x^{k})=R_{i_{\ell(t)}^{t}}\cdots R_{i_{2}^{t}}R_{i_{1}^{t}}(x^{k}),\label{as1}
\end{equation}
and then calculating
\begin{equation}
x^{k+1}=\sum_{t\in\Theta_{k}}w_{k}(t)Z[t](x^{k}).\label{as2}
\end{equation}

This algorithmic scheme applies to $x^{k}$ successively the operators
$R_{i}:=U_{i}V_{i}$ whose indices belong to the string $t$. This
can be done in parallel for all strings and then the end-points of
all strings are convexly combined, with weights that may vary from
iteration to iteration, to form the next iterate $x^{k+1}$. This
is indeed an algorithm provided that the operators $\{R_{i}\}_{i=1}^{p}$
all have algorithmic implementations. In this framework we get a \textit{sequential
algorithm} by allowing a single string created by the index vector
$t=\Gamma$ and a \textit{simultaneous algorithm} by the choice\ of
$p$ different strings of length one each containing one element of
$\Gamma.$ Intermediate structures are possible by judicious choices
of strings and weights.

\section{Convergence\label{subsec:convergence}}

Next we prove the equivalence between Problem \ref{prob:moscfpp}
and a common fixed point problem which is not split, give a description
of Fix$(V_{j}),$ and state a property of $V_{j}$.

\begin{lemma} \label{Lem:a} Denote the solution set of Problem \ref{prob:moscfpp}
by $\Omega$ and assume that it is nonempty. Then, for $V_{j}$ as
in (\ref{eq:V_j}),

(i) $x^{\ast}\in\Omega$ if and only if $x^{\ast}$ solves the common
fixed point problem:
\begin{equation}
\text{Find }x^{\ast}\in\left(\cap_{i=1}^{p}\text{Fix}(U_{i})\right)\cap\left(\cap_{j=1}^{r}\text{Fix}(V_{j})\right),
\end{equation}
(ii) for all $j=1,2,\ldots,r$:
\begin{equation}
\text{Fix }(V_{j})=\{x\in\mathcal{H}\mid A_{j}x\in\text{Fix }(T_{j})\}=A_{j}^{-1}(\text{Fix }(T_{j})),
\end{equation}
{where $A_{j}^{-1}$ denotes here the inverse image
(pre-image) of $A_{j}$.} I.e., $A_{j}^{-1}:\mathcal{K}\rightarrow\mathcal{H}$
and for any $y\in\mathcal{K}$, $A_{j}^{-1}(y):=\{x\in\mathcal{H}\mid A_{j}x=y\}$;

(iii) if, in addition, all operators $T_{j}$ are cutters then all
$V_{j}$ are cutters (i.e., are 1-SQNE),

\noindent (iv) if $T_{j}$ is $\rho$-SQNE, $A_{j}\cap\text{Fix}T_{j}\neq\emptyset$
{(here we refer to $A_{j}$ as the image set of $A_{j}$)}
and satisfies the demi-closedness principle then $V_{j}$ also satisfies
the demi-closedness principle. \end{lemma}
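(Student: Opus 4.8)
The plan is to prove the four parts of Lemma \ref{Lem:a} largely in sequence, since each part feeds the next. For part (i), I would argue the equivalence directly by characterizing the fixed points of $V_j$. Observe that by the definition $V_j = I - \gamma_j A_j^\star(I-T_j)A_j$, we have $x^\ast \in \mathrm{Fix}(V_j)$ if and only if $A_j^\star(I-T_j)A_j x^\ast = 0$. The key is to show this vanishing of $A_j^\star(I-T_j)A_j x^\ast$ is equivalent to $(I-T_j)A_j x^\ast = 0$, i.e.\ $A_j x^\ast \in \mathrm{Fix}(T_j)$. Combined with the requirement $x^\ast \in \cap_{i=1}^p \mathrm{Fix}(U_i)$ (which is unchanged between the two formulations), this yields the stated equivalence. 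Part (ii) is then essentially the content of that same characterization, restated as a set identity, so I would fold its proof into the computation done for (i).

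The technical heart of the argument, and what I expect to be the main obstacle, is justifying the implication $A_j^\star(I-T_j)A_j x^\ast = 0 \Rightarrow (I-T_j)A_j x^\ast = 0$. The reverse implication is trivial, but the forward direction is not automatic for a general bounded linear $A_j$ whose kernel of $A_j^\star$ may be nontrivial. The standard device is to pair with a solution point. Assuming $\Omega \neq \emptyset$, fix some $z \in \Omega$, so that $A_j z \in \mathrm{Fix}(T_j)$. Writing $u := (I-T_j)A_j x^\ast$ and using the cutter (or more generally quasi-nonexpansivity) inequality for $T_j$ at the point $A_j x^\ast$ with fixed point $A_j z$, namely $\langle T_j(A_j x^\ast) - A_j x^\ast,\, T_j(A_j x^\ast) - A_j z\rangle \le 0$, one can manipulate the inner product $\langle A_j^\star(I-T_j)A_j x^\ast,\, x^\ast - z\rangle = \langle (I-T_j)A_j x^\ast,\, A_j x^\ast - A_j z\rangle$ to force $\|u\| = 0$. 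I would set this inner-product computation up carefully, since it is where the cutter hypothesis on $T_j$ is genuinely used and where the bridge between the two Hilbert spaces via the adjoint is made.

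For part (iii), I would invoke the known result that the Landweber-type operator built from a cutter is itself a cutter. The cleanest route is to verify the cutter inequality $\langle V_j(x) - x,\, V_j(x) - w\rangle \le 0$ for all $w \in \mathrm{Fix}(V_j)$ directly, again reducing it via the adjoint relation to the cutter inequality for $T_j$ on $\mathcal{K}$, and using the relaxation parameter constraint $\gamma_j \in (0, 1/\|A_j\|^2)$ to absorb the cross terms with the correct sign. This is the place where the upper bound on $\gamma_j$ earns its keep; the inequality $\gamma_j \|A_j\|^2 \le 1$ is what guarantees the $1$-SQNE (cutter) property survives. Being a cutter is equivalent to being $1$-SQNE, which settles (iii).

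For part (iv), the goal is to transfer the demiclosedness principle for $I - T_j$ through the operator $A_j$ to $I - V_j$. I would take a sequence $x^k \rightharpoonup \bar x$ (weak convergence) with $(I-V_j)x^k \to 0$, which by the definition of $V_j$ means $\gamma_j A_j^\star(I-T_j)A_j x^k \to 0$, and aim to conclude $(I-V_j)\bar x = 0$, i.e.\ $A_j \bar x \in \mathrm{Fix}(T_j)$. Since $A_j$ is bounded and linear, it is weak-to-weak continuous, so $A_j x^k \rightharpoonup A_j \bar x$; the difficulty is passing from $A_j^\star(I-T_j)A_j x^k \to 0$ (strong convergence after applying the adjoint) back to $(I-T_j)A_j x^k \to 0$ so as to apply the demiclosedness hypothesis for $I-T_j$ at the point $A_j \bar x$. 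I would again exploit pairing against a fixed point $A_j z$ (requiring $A_j^{-1}(\mathrm{Fix}\,T_j) \neq \emptyset$, as the hypothesis provides via the condition on $A_j$ and $\mathrm{Fix}\,T_j$) together with the $\rho$-SQNE inequality for $T_j$ to upgrade the weak information into the strong statement $\|(I-T_j)A_j x^k\| \to 0$. This last upgrade, converting the adjoint-annihilated limit into genuine strong convergence of the residual, is the delicate step of part (iv) and parallels the obstacle already met in part (i).
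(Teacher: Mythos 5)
Your proposal is correct, and it is in fact \emph{more} careful than the paper's own proof in two places. For parts (i)--(ii) the paper simply writes a chain of equivalences, asserting $A_j x^{\ast}-T_jA_jx^{\ast}=0 \Leftrightarrow A_j^{T}(I-T_j)A_jx^{\ast}=0$ without comment; the backward implication there is exactly the obstacle you isolated, and it is \emph{not} automatic: $(I-T_j)A_jx^{\ast}$ could a priori be a nonzero element of $\ker A_j^{\star}=(\operatorname{ran}A_j)^{\perp}$, and one can build counterexamples when $T_j$ is an arbitrary operator with fixed points. Your pairing device --- take $z\in\Omega$, so $A_jz\in\mathrm{Fix}(T_j)$, and use $0=\langle A_j^{\star}(I-T_j)A_jx^{\ast},x^{\ast}-z\rangle=\langle (I-T_j)A_jx^{\ast},A_jx^{\ast}-A_jz\rangle\geq \tfrac12\Vert(I-T_j)A_jx^{\ast}\Vert^{2}$ (the last inequality from quasi-nonexpansiveness, or the cutter property, of $T_j$) --- is the standard and correct way to close this, and is precisely the argument in Cegielski's work on Landweber-type operators that the paper implicitly leans on. Note that this makes your proof of (i)--(ii) use a hypothesis (quasi-nonexpansiveness of $T_j$) that the lemma only states ``in addition'' for part (iii); this is a defect of the lemma's phrasing rather than of your argument, since without some such assumption the claimed set identity can fail. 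For part (iii) your direct verification of the cutter inequality, reducing via the adjoint to the cutter inequality for $T_j$ and absorbing the cross term with $\gamma_j\Vert A_j\Vert^{2}\leq 1$, is exactly the paper's computation. For part (iv) the paper merely cites \cite[Theorem 8(iv)]{cegielski14}, whereas you sketch the proof itself; your sketch is sound: boundedness of the weakly convergent $\{x^{k}\}$ plus strong convergence $A_j^{\star}(I-T_j)A_jx^{k}\to 0$ and the same pairing/SQNE inequality upgrade the residual to $\Vert(I-T_j)A_jx^{k}\Vert\to 0$, after which weak-to-weak continuity of $A_j$ and demiclosedness of $I-T_j$ at $0$ finish the argument. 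In short: same architecture, but you fill a genuine lacuna in the paper's (i)--(ii) and replace an external citation in (iv) with a self-contained proof.
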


\begin{proof} (i) We need to show only that
\begin{equation}
x^{\ast}\in\cap_{j=1}^{r}\text{Fix}(V_{j})\Leftrightarrow A_{j}x^{\ast}\in\text{Fix }(T_{j})\text{ for all }j=1,2,\ldots,r.
\end{equation}

Indeed, for any $j=1,2,\ldots,r,$
\begin{gather}
A_{j}x^{\ast}\in\text{Fix }(T_{j})\Leftrightarrow A_{j}x^{\ast}-T_{j}A_{j}x^{\ast}=0\nonumber \\
\Leftrightarrow{{}A_{j}^{\star}}(Id-T_{j})A_{j}x^{\ast}=A_{j}^{\star}0\text{ }\Leftrightarrow-\gamma_{j}A_{j}^{\star}(Id-T_{j})A_{j}x^{\ast}=0\nonumber \\
\Leftrightarrow x^{\ast}-\gamma_{j}A_{j}^{\star}(Id-T_{j})A_{j}x^{\ast}=x^{\ast}\Leftrightarrow x^{\ast}\in\text{Fix}(V_{j}).\label{eq:equivalence}
\end{gather}

(ii) Follows from (\ref{eq:equivalence}).

(iii) To show that $V_{j}$ is a cutter take $w\in\text{Fix}(V_{j})$,
$\gamma_{j}\in\left(0,\frac{1}{L_{j}}\right)$ and $\xi\in\mathcal{H}$.
\begin{align}
 & \frac{1}{\gamma_{j}}\left\langle w-V_{j}(\xi),\xi-V_{j}(\xi)\right\rangle \nonumber \\
 & =\left\langle w-\xi-\gamma_{j}A_{j}^{\star}(T_{j}-Id)A_{j}\xi,A_{j}^{\star}(Id-T_{j})A_{j}\xi\right\rangle \nonumber \\
 & =\left\langle w-\xi,A_{j}^{\star}(Id-T_{j})A_{j}\xi\right\rangle +\gamma_{j}\Vert A_{j}^{\star}(Id-T_{j})A_{j}\xi\Vert^{2}\nonumber \\
 & =\left\langle A_{j}w-A_{j}\xi,(Id-T_{j})A_{j}\xi\right\rangle +\gamma_{j}\Vert A_{j}^{\star}(Id-T_{j})A_{j}\xi\Vert^{2}\nonumber \\
 & =\left\langle A_{j}w-T_{j}(A_{j}\xi),(Id-T_{j})A_{j}\xi\right\rangle +\gamma_{j}\Vert A_{j}^{\star}(Id-T_{j})A_{j}\xi\Vert^{2}\nonumber \\
 & \ \ \ \ \ -\Vert(Id-T_{j})A_{j}\xi\Vert^{2}.
\end{align}
Since $T_{j}$ is a cutter and $A_{j}w\in\text{Fix }(T_{j})$, we
have
\begin{equation}
\left\langle A_{j}w-T_{j}(A_{j}\xi),(Id-T_{j})A_{j}\xi\right\rangle \leq0.
\end{equation}
Also,
\begin{equation}
\gamma_{j}\Vert A_{j}^{\star}(Id-T_{j})A_{j}\xi\Vert^{2}\leq\gamma_{j}\Vert A_{j}\Vert^{2}\Vert(Id-T_{j})A_{j}\xi\Vert^{2}\leq\Vert(Id-T_{j})A_{j}\xi\Vert^{2},
\end{equation}
for all $\gamma_{j}\in(0,1/L_{j})$. Using the above we get that
\begin{equation}
\left\langle w-V_{j}(\xi),\xi-V_{j}(\xi)\right\rangle \leq0.
\end{equation}
which proves that $V_{j}$ is a cutter.

(iv) Proved in \citet[Theorem 8(iv)]{cegielski14}. \end{proof}

The special case where in Problem \ref{prob:moscfpp} there is only
one operator $A:\mathcal{H}\rightarrow\mathcal{K}$ and (\ref{eq:prob1-line2})
{is replaced by}
\begin{equation}
\text{for all }1\leq j\leq r,\text{ }Ax^{\ast}\in\text{Fix }(T_{j})
\end{equation}
which amounts to $Ax^{\ast}\in\cap_{j=1}^{r}$Fix $(T_{j})$ was treated
in the literature \citep[see, e.g.,][]{cegielski15,cegielski14,wx11}.
The extensions to our more general case, necessitated by the application
to RTTP at hand, follow the patterns in those earlier papers. In our
convergence analysis we rely on the convergence result of \citet[Theorem 4.1]{sr16}
who, motivated by \citet[Algorithm 3.3]{cen-tom03}, invented and
investigated the ``modular string averaging (MSA) method''\ \citep[Procedure 1.1]{sr16}.

{For the convenience of the readers we quote next
in full details Procedure 1.1 and Theorem 4.1 of \citet{sr16}. We
adhere to the original notations of Reich and Zalas and later identify
them with the notations of our work. Let $U_{i}:\mathcal{H}\rightarrow\mathcal{H}$
be a finite family of quasi-nonexpansive mappings where $i\in I:=\{1,2,\cdots,M\}$
and define $U_{0}:=Id$. The problem under investigation is the common
fixed point problem of finding an $x\in C:=\cap_{i\in I}$Fix$(U_{i}).$
The algorithmic scheme is
\begin{equation}
x^{0}\in\mathcal{H},\;\;x^{k+1}=T_{k}x^{k}.\label{Alg:GenOpial}
\end{equation}
where the operator $T_{k}$ depends on a chosen subset of the input
operators ${U_{i}}.$}

{Reich and Zalas proposed Procedure 1.1 for constructing
operators $T_{k}$ (called ``modules'') is as follows. Fix $N\in\mathbb{N}$,
for $n=1,2,\ldots,N$; let $\varepsilon\in(0,1)$ be a fixed parameter;
define modules $V_{n}:=U_{-n}$ for all $n=-M,\ldots,0$. For $n=1,2,\ldots,N$
define modules $V_{n}$ by choosing one of the following cases:}

{(a) Relaxation: Fix a singleton $J_{n}=\{j_{n}\}\subseteq\{-M,\ldots,0\}$
and a relaxation $\alpha_{n}\in[\varepsilon,2-\varepsilon]$, and
set
\begin{equation}
V_{n}:=Id+\alpha_{n}\left(V_{j_{n}}-Id\right).
\end{equation}
}

{(b) Convex combination: Fix a nonempty subset $J_{n}\subseteq\{-M,\ldots,n-1\}$
and weights $\omega_{j,n}\in[\varepsilon,1-\varepsilon]$ satisfying
$\sum_{j\in J_{n}}\omega_{j,n}=1$, and set
\begin{equation}
V_{n}:=\sum_{j\in J_{n}}\omega_{j,n}V_{j}.
\end{equation}
}

{(c) Composition: Fix a ``string'' $J_{n}\subseteq\{-M,\ldots,n-1\}$
with length less than $M+n$ and set
\begin{equation}
V_{n}:=\Pi_{j\in J_{n}}V_{j}.
\end{equation}
}

{Using the above Modular String Averaging (MSA) procedure
of Reich and Zalas, by preforming $N_{k}$ steps with parameter $\varepsilon_{k}>0$,
$T_{k}$ is defined as the last module from the pool, that is, $T_{k}:=V_{N_{k}}^{k}$.
Such constructions of the operators $T_{k}$ lead to various combination
schemes such as: sequential, convex combination and composition. A
string averaging (SA) scheme that is relevant to our method here is
obtained by taking a convex combination of multiple compositions,
as in \citet[Equation (1.12)]{sr16}.}

{Reich and Zalas Theorem 4.1 is quoted next.}

\begin{theorem}\label{Th:sr_4.1} {Let $\left\{ x^{k}\right\} _{k=0}^{\infty}$
be a sequence generated by the iterative method}
\begin{equation}
{x^{0}\in\mathcal{H},\quad x^{k+1}=T_{k}(x^{k})}
\end{equation}

{and assume that:}

{(i) each operator $U_{i}$, $i\in I$ is a cutter;}

{(ii) $I\subseteq I_{k}\cup I_{k+1}\cup\cdots\cup I_{k+s-1}$,
for each $k=0,1,2,\ldots,$ and some $s\geq M-1$;}

{(iii) the sequence $\{N_{k}\}_{k=0}^{\infty}$ is
bounded.}

{If, for each $i\in I$, the operator $U_{i}$ satisfies
Opial's demi-closedness principle then the sequence $\left\{ x^{k}\right\} _{k=0}^{\infty}$
converges weakly to some point in $C$.}

{If, for each $i\in I$, the operator $U_{i}$ is
approximately shrinking and the family $\mathcal{C}:=\{\mathrm{Fix} \ U_{i}\mid i\in I\}$
is boundedly regular then the sequence $\left\{ x^{k}\right\} _{k=0}^{\infty}$
converges strongly to some point in $C$.}

\end{theorem}

{Our convergence theorem for the dynamic string-averaging
CQ-method now follows.}

\begin{theorem} Let $p\geq1$ be an integer and suppose that Problem
\ref{prob:moscfpp} with $r=p$ has a nonempty solution set $\Omega$.
Let $\{U_{i}\}_{i=1}^{p}$ and $\{T_{i}\}_{i=1}^{p}$ be cutters on
Hilbert spaces $\mathcal{H}$ and $\mathcal{K}$, respectively. Further
assume that $U_{i}-Id$ and $T_{i}-Id$ are demi-closed at zero for
all $i$. Then any sequence $\{x^{k}\}_{k=0}^{\infty},$ generated
by Algorithm \ref{alg:sa-cq} with $R_{i}:=U_{i}V_{i}$ for all $i,$
where $V_{i}$ are defined as in (\ref{eq:V_j}), converges weakly
to a point $x^{\ast}\in\Omega$. \end{theorem}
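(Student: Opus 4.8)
The plan is to recast Algorithm \ref{alg:sa-cq} as an instance of the modular string-averaging (MSA) scheme of \cite{sr16} acting on the family $\{R_i\}_{i=1}^{p}$, and then to verify the three hypotheses under which \cite[Theorem 4.1]{sr16} guarantees weak convergence to a common fixed point: (a) each $R_i$ is strongly quasi-nonexpansive; (b) $\cap_{i=1}^{p}\mathrm{Fix}(R_i)$ coincides with the target set $\Omega$; and (c) each $R_i-I$ is demiclosed at zero. The bookkeeping built into $\mathcal{M}_{\ast}(\Delta,\bar{q})$ --- fit index sets, string lengths bounded by $\bar{q}$, and weights bounded below by $\Delta$ --- is precisely the admissibility required by the MSA procedure, so once (a)--(c) hold the conclusion is immediate.

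First I would assemble the building blocks. By hypothesis each $U_i$ is a cutter, hence $1$-SQNE; by Lemma \ref{Lem:a}(iii) each $V_i$ is also a cutter, hence $1$-SQNE. Since $\Omega\neq\emptyset$ and $\Omega\subseteq\mathrm{Fix}(U_i)\cap\mathrm{Fix}(V_i)$ by Lemma \ref{Lem:a}(i), the pair $\{U_i,V_i\}$ is a family of SQNE operators with nonempty common fixed point set, so by the closedness of such families under composition (see \cite[Corollary 2.1.47]{Cegielski12}) the operator $R_i=U_iV_i$ is SQNE with $\mathrm{Fix}(R_i)=\mathrm{Fix}(U_i)\cap\mathrm{Fix}(V_i)$. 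Intersecting over $i$ and invoking Lemma \ref{Lem:a}(i) (with $r=p$) yields $\cap_{i=1}^{p}\mathrm{Fix}(R_i)=\Omega$, which settles (a) and (b).

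The crux is (c). Each $V_i-I$ is demiclosed at zero by Lemma \ref{Lem:a}(iv), whose hypotheses hold here because $T_i$, being a cutter, is $1$-SQNE, $T_i-I$ is demiclosed at zero by assumption, and $\mathrm{Fix}(V_i)\supseteq\Omega\neq\emptyset$. To pass from the factors to the composition, suppose $x^k\rightharpoonup\bar{x}$ and $R_ix^k-x^k\to0$. Fixing $z\in\mathrm{Fix}(R_i)$ and chaining the $1$-SQNE inequalities for $V_i$ and then $U_i$ gives
\begin{equation}
\|V_ix^k-x^k\|+\|U_iV_ix^k-V_ix^k\|\leq\|x^k-z\|-\|R_ix^k-z\|\leq\|R_ix^k-x^k\|\to0,
\end{equation}
so that $V_ix^k-x^k\to0$ and $(U_i-I)(V_ix^k)\to0$ strongly. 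Demiclosedness of $V_i-I$ at zero then forces $\bar{x}\in\mathrm{Fix}(V_i)$, and since $V_ix^k=x^k+(V_ix^k-x^k)\rightharpoonup\bar{x}$, demiclosedness of $U_i-I$ at zero forces $\bar{x}\in\mathrm{Fix}(U_i)$; hence $\bar{x}\in\mathrm{Fix}(R_i)$, establishing (c).

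With (a)--(c) in hand, \cite[Theorem 4.1]{sr16} applies to the MSA iteration (\ref{eq:algv}) and yields that $\{x^k\}$ converges weakly to some $x^{\ast}\in\cap_{i=1}^{p}\mathrm{Fix}(R_i)=\Omega$. The main obstacle I anticipate is the demiclosedness transfer in step (c): one must be careful that the SQNE chaining genuinely produces \emph{strong} convergence of both increments $V_ix^k-x^k$ and $(U_i-I)(V_ix^k)$ before invoking the individual demiclosedness principles, and that the weak limit of $V_ix^k$ is correctly identified with $\bar{x}$ via the decomposition $V_ix^k=x^k+(V_ix^k-x^k)$. Everything else --- Fej\'er monotonicity of $\{x^k\}$ with respect to $\Omega$, boundedness, and the extraction of the weak limit through Opial's property --- is delivered wholesale by the cited MSA theorem, provided the hypotheses above are matched to its statement.
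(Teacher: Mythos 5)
Your proposal is correct and rests on the same master plan as the paper's proof: recast Algorithm \ref{alg:sa-cq} as a modular string-averaging iteration and invoke \cite[Theorem 4.1]{sr16}, with the SQNE property of the $\Gamma_{\Theta_k,w_k}$ coming from Lemma \ref{Lem:a}(iii) and the closedness of SQNE families under composition. Where you genuinely diverge is in the treatment of demiclosedness, which is the crux of the reduction. The paper never proves that $R_i-I$ is demiclosed at zero; instead it re-identifies the basic operator family of \cite{sr16} as the $2p$ operators $U_1,\dots,U_p,V_1,\dots,V_p$ (so that strings built from the $R_i$ are simply longer strings in these building blocks), after which demiclosedness is needed only factor-by-factor: for $U_i-I$ it is assumed, and for $V_i-I$ it is Lemma \ref{Lem:a}(iv). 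You instead keep the $p$ operators $R_i$ as the building blocks and prove demiclosedness of $R_i-I$ at zero directly by the SQNE chaining argument, together with the identification $\mathrm{Fix}(R_i)=\mathrm{Fix}(U_i)\cap\mathrm{Fix}(V_i)$, a step the paper's bookkeeping lets it skip. Your route is more self-contained and applies the cited theorem exactly as stated with $M=p$ operators; the paper's route avoids any composition argument at the cost of a less transparent re-identification (and an implicit doubling of the string-length bound).

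One technical caveat in your chaining display: a cutter satisfies the \emph{squared} inequality $\Vert T(x)-z\Vert^{2}\leq\Vert x-z\Vert^{2}-\Vert T(x)-x\Vert^{2}$, not the unsquared one you use (the paper's Definition (x), stated without squares, is inconsistent with the claim that cutters are $1$-SQNE: the metric projection already violates it, e.g.\ projecting $(0,1)$ onto the horizontal axis and taking $z=(1,0)$). The fix is immediate: square the chain,
\begin{equation}
\Vert V_{i}x^{k}-x^{k}\Vert^{2}+\Vert U_{i}V_{i}x^{k}-V_{i}x^{k}\Vert^{2}\leq\Vert x^{k}-z\Vert^{2}-\Vert R_{i}x^{k}-z\Vert^{2}\leq\Vert R_{i}x^{k}-x^{k}\Vert\left(\Vert x^{k}-z\Vert+\Vert R_{i}x^{k}-z\Vert\right),
\end{equation}
and use the boundedness of the weakly convergent sequence $\{x^{k}\}_{k=0}^{\infty}$ to conclude that both increments vanish strongly. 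With that adjustment, the remainder of your step (c) \textemdash{} passing the weak limit to $V_{i}x^{k}=x^{k}+(V_{i}x^{k}-x^{k})$ and applying the two demiclosedness principles in turn \textemdash{} is exactly right, and the conclusion follows as you state.
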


\begin{proof}

{First we identify the notations in our work with
those in \citet{sr16}.}

{(1) The operators $\{U_{i}\}_{i=1}^{M}$ of \citet[Theorem 4.1]{sr16}
are our $\{R_{i}\}_{i=1}^{p}$ where$R_{i}:=U_{i}V_{i}$ as described
in the beginning of Section \ref{Sec:Method} above.}\\
{{} (2) Our operators $\Psi_{\Theta_{k},w_{k}}$ (\ref{eq:algv})
are identified with the algorithmic operators $T_{k}$ of Equation
(1.12) in \citet{sr16}.}\\
{{} (3) Our operators $\{U_{i}\}_{i=1}^{p}$ and $\{T_{i}\}_{i=1}^{p}$
are assumed to be cutters, then so are also $\{V_{i}\}_{i=1}^{p}$,
by Lemma \ref{Lem:a}(iii). Hence, the composition operators $R_{i}:=U_{i}V_{i}$
are $\rho$-SQNE for all $i$ and, therefore, also our $\Psi_{\Theta_{k},w_{k}}$
are $\rho$-SQNE for all $k$.}\\
{{} (4) We assume that our $U_{i}-Id$ and $T_{i}-Id$
are demi-closed at zero for all $i$, therefore, by Lemma \ref{Lem:a}(iv),
$V_{i}-Id$ are also demi-closed at zero. So, our operators $R_{i}=U_{i}V_{i}$,
as composition of demi-closed operators, are demi-closed, see for
example \citep[Theorem 4.2]{cegielski15}. Our operators $R_{i}=U_{i}V_{i}$
are identified with $\{U_{i}\}_{i=1}^{M}$ of \citet{sr16}.}

Next we show that our dynamic string-averaging CQ-method fits into
the MSA \citep[Procedure 1.1]{sr16} and that the assumptions of \citet[Theorem 4.1]{sr16}
hold.

Since we identify our ${\Psi_{\Theta_{k},w_{k}}}$ from
(\ref{eq:algv}) with the right-hand side of Equation (1.12) of \citet{sr16}
(being careful with regard to the duplicity of symbols that represent
different things in that work and here), Algorithm \ref{alg:sa-cq}
can be represented by the iterative process of Equation (1.2), or
Equation (4.2), of \citet{sr16}.

Next we show the validity of the assumptions needed by \citet[Theorem 4.1]{sr16}.

Assumption (i) of \citet[Theorem 4.1]{sr16}: The operators $\{U_{i}\}_{i=1}^{M}$
of \citet[Theorem 4.1]{sr16} are our $R_{i}:=U_{i}V_{i}$. Although
our $R_{i}$ are not necessarily cutters, the arguments in the proof
of \citet[Theorem 4.1]{sr16} are based on the strongly quasi-nonexpansiveness
of the operators $T_{k}$ there (our ${\Psi_{\Theta_{k},w_{k}}}$)
and by Lemma \ref{Lem:a}(iii) above, our operators $\{V_{i}\}_{i=1}^{p}$
(defined in (\ref{eq:V_j})) are cutters and this together with the
assumption on our $\{U_{i}\}_{i=1}^{p}$ and $\{T_{i}\}_{i=1}^{p}$,
yields that the composition operators $R_{i}:=U_{i}V_{i}$ are $\rho$-SQNE
for all $i$ and, thus, so are also our ${\Psi}_{{\Theta_{k},w_{k}}}$.

Assumptions (ii)+(iii) of \citet[Theorem 4.1]{sr16}: Since the construction
of the operators ${\Psi}_{{\Theta_{k},w_{k}}}$
is based on $\mathcal{M}_{\ast}$ (\ref{eq:11516}) which mandates
a fit $\Theta$, it guarantees that every index $i\in\Gamma$ appears
in the construction of ${\Psi}_{{\Theta_{k},w_{k}}}$
for all $k>0$, thus, Assumption (ii) in \citet[Theorem 4.1]{sr16}
holds. Following the same reasoning, it is clear that the number of
steps $N_{k}$, defined in the MSA \citep[Procedure 1.1]{sr16}, is
bounded.

The weak convergence part of the proof of \citet[Theorem 4.1]{sr16}
requires that all (their) $\{U_{i}\}_{i=1}^{M}$ satisfy Opial's demi-closedness
principle (i.e., that $U_{i}-Id$ are demi-closed at zero). In our
case, we assume that $U_{i}-Id$ and $T_{i}-Id$ are demi-closed at
zero for all $i$. By Lemma \ref{Lem:a}(iv) above $V_{i}-Id$ are
also demi-closed at zero. So, we identify $\{U_{i}\}_{i=1}^{M}$ of
\citet{sr16} with our $U_{i}$s and $V_{i}$s and construct first
the operators $R_{i}=U_{i}V_{i}$, and then use them as the building
bricks of the algorithmic operators $\Psi_{\Theta_{k},w_{k}}$.

{Observe that in our proposed dynamic string-averaging
scheme the weights are chosen, in every iteration $k,$ so that $(\Theta_{k},w_{k})\in\mathcal{M}_{\ast}$
(see the iterative step of Algorithm \ref{alg:sa-cq}). This requires,
according to (\ref{eq:11516}), that $w(t)\geq\Delta\text{ for all }t\in\Theta,$
where $\Delta\in(0,1/p)$ is a fixed positive number. Therefore, for
any $t$ it must hold that $\sum_{k=0}^{\infty}w_{k}(t)=\infty$,
meaning that we ``visit'' every operator infinitely many times.
This fully coincides with the assumption in (Reich and Zalas, 2016)
that $w_{k}(i)\in[\varepsilon,1-\varepsilon]$ for some $\varepsilon>0$
which implies that $\sum_{k=0}^{\infty}w_{k}(i)=\infty$ for all $i$,
in their notation.}

Thus, the desired result is obtained. \end{proof}

\begin{remark} (i) If one assumes that the $T_{j}$ operators are
firmly nonexpansive, then similar arguments as in the proof of \citet[Theorem 3.1]{moudafi11}
show that the $V_{j}$ operators are also averaged and then \citet[Theorem 4.1]{sr16}
can be adjusted to hold for averaged operators.

(ii) It is possible to propose inexact versions of Algorithm \ref{alg:sa-cq}
following \citet[Theorem 4.5]{sr16} and Combettes' ``\textit{almost
cyclic sequential algorithm} (ACA)'' \citep[Algorithm 6.1]{combettes01}.

(iii) Our work can be extended to cover also underrelaxed operators,
i.e., by defining $R_{i}:=(U_{i})_{\lambda}(V_{i})_{\delta}$ for
$\lambda,\delta\in\lbrack0,1].$ This is allowed due the fact that
if an operator is firmly quasi-nonexpansive, then so is its relaxation.

(iv) \citet[Theorem 4.1]{sr16} also includes a strong convergence
part under some additional assumptions on their operators $\{U_{i}\}_{i=1}^{M}$.
It is possible to adjust this theorem for our case as well.

{(v) We proposed here a general scheme that allows
dynamic string averaging; the closest CQ variant appears in the work
of \citet[Theorem 3.1]{wx11} where only sequential, cyclically controlled,
iterations are allowed.}

{(vi) For the case of a two-set non-convex feasibility
problem, \citet[Theorem 5.3]{abs13} propose a CQ variant but without
a relaxation and if more than two non-convex sets are allowed, then
a fully simultaneous method is obtained.}\end{remark}

\section{Percentage violation constraints (PVCs) arising in radiation therapy
treatment planning\label{sect:PVC}}

\subsection{Transforming problems with a PVC\label{subsec:transforming}}

Given $p$ closed convex subsets $\varOmega_{1},\varOmega_{2},\cdots,\varOmega_{p}\subseteq\mathbb{R}^{n}$
of the $n$-dimensional Euclidean space $\mathbb{R}^{n}$, expressed
as level sets
\begin{equation}
\varOmega_{j}=\left\{ x\in\mathbb{R}^{n}\mid f_{j}(x)\leq v_{j}\right\} ,\text{ for all }j\in J:=\{1,2,\ldots,p\},\label{eq:cfp}
\end{equation}
where $f_{j}:\mathbb{R}^{n}\rightarrow\mathbb{R}$ are convex functions
and $v_{j}$ are some given real numbers, the convex feasibility problem
(CFP) is to find a point $x^{\ast}\in\varOmega:=\cap_{j\in J}\varOmega_{j}.$
If $\varOmega=\emptyset$ {then the CFP is said to
be inconsistent}.

\begin{problem} \label{prob:cfp-pvc}\textbf{Convex feasibility problem
(CFP) with a percentage-violation constraint (PVC) (CFP+PVC).} Consider
$p$ closed convex subsets $\varOmega_{1},\varOmega_{2},\cdots,\varOmega_{p}\subseteq\mathbb{R}^{n}$
of the $n$-dimensional Euclidean space $\mathbb{R}^{n}$, expressed
as level sets {according to (\ref{eq:cfp})}. Let
$0\leq\alpha\leq1$ and $0<\beta<1$ be two given real numbers. The
CFP+PVC is:

Find an $x^{\ast}\in\mathbb{R}^{n}$ such that $x^{\ast}\in\cap_{j=1}^{p}\varOmega_{j}$
and in\ up\ to\ a fraction\ $\alpha$ (i.e., 100$\alpha$\%) of\ the\ total
number of inequalities in (\ref{eq:cfp}) the\ bounds $v_{j}$\ may\ be\ potentially\ violated
by up\ to\ a fraction\ $\beta$ (i.e., 100$\beta$\%) of their
values.\ \end{problem}

A PVC is an integer constraint by its nature. It changes the CFP (\ref{eq:cfp})
to which it is attached from being a continuous feasibility problem
into becoming a mixed integer feasibility problem. Denoting the inner
product of two vectors in $\mathbb{R}^{n}$ by $\left\langle a,b\right\rangle :=\sum_{i=1}^{n}{a_{i}b_{i}}$,
the linear feasibility problem (LFP) with PVC (LFP+PVC) is the following
special case of Problem \ref{prob:cfp-pvc}.

\begin{problem} \label{prob:lfp-pvc}\textbf{Linear feasibility problem
(LFP) with a percentage-violation constraint (PVC) (LFP+PVC).} {This
is the same as Problem \ref{prob:cfp-pvc} with $f_{j}$, for $j=1,2,\ldots,p,$
in (\ref{eq:cfp}) being linear functions, meaning that the sets $\varOmega_{j}$
are half-spaces:
\begin{equation}
\varOmega_{j}=\left\{ x\in\mathbb{R}^{n}\mid\left\langle a^{j},x\right\rangle \leq b_{j}\right\} ,\text{ for all }j\in J,\label{eq:lfp}
\end{equation}
for a set of given vectors $a^{j}\in\mathbb{R}^{n}$ and $b_{j}$
some given real numbers.} \end{problem}

Our tool to ``translate''\ the mixed integer LFP+PVC into a ``continuous''\ one
is the notion of sparsity norm, called elsewhere the zero-norm, of
a vector $x\in\mathbb{R}^{n}$ which counts the number of nonzero
entries of $x,$ that is,
\begin{equation}
\Vert x\Vert_{0}:=\left\vert \{x_{i}\mid x_{i}\neq0\}\right\vert ,\label{eq:L0norm}
\end{equation}
where $\mid\cdot\mid$ denotes the cardinality, i.e., the number of
elements, of a set. This notion has been recently used for various
purposes in compressed sensing, machine learning and more. The rectifier
(or ``positive ramp operation'') on a vector $x\in\mathbb{R}^{n}$
means that, for all $i=1,2,\ldots,n,$
\begin{equation}
(x_{+})_{i}:=\max(0,x_{i})=\left\{ \begin{array}{l}
x_{i},\text{ if }x_{i}>0,\\
0,\text{ if }x_{i}\leq0.
\end{array}\right..
\end{equation}
Obviously, $x_{+}$ is always a component-wise nonnegative vector.
Hence, $\Vert x_{+}\Vert_{0}$ counts the number of positive entries
of $x$ and is defined by
\begin{equation}
\Vert x_{+}\Vert_{0}:=|\{x_{i}\mid x_{i}>0\}|.
\end{equation}
We translate the LFP+PVC to the following.

\begin{problem} \label{prob:lfp-pvc-upper}\textbf{Translated problem
of LFP+PVC (for LFP with upper bounds). }For the data of Problem \ref{prob:lfp-pvc},
{let $A\in\mathbb{R}^{p\times n}$ be the matrix whose
columns are formed by the vectors $a^{j}$ and let $b\in\mathbb{R}^{p}$
be the column vector consisting of the values $b_{j}$, for all $j\in J$.}
The translated problem of LFP+PVC (for LFP with upper bounds)\textbf{
}is:
\begin{gather}
\text{Find an }x^{\ast}\in\mathbb{R}^{n}\text{ such that }\left\langle a^{j},x^{\ast}\right\rangle \leq(1+\beta)b_{j},\text{}\label{eq:prob-trans1}\\
\text{for all }j\in J,\text{ and }\Vert(Ax^{\ast}-b)_{+}\Vert_{0}\leq\alpha p.
\end{gather}

\end{problem}

The number of the violations in (\ref{eq:prob-trans1}) is $\Vert(Ax^{\ast}-b)_{+}\Vert_{0}$
and $\Vert(Ax^{\ast}-b)_{+}\Vert_{0}\leq\alpha p$ guarantees that
the number of violations of up to $\beta$ in the original row inequalities
remains at bay as demanded. This is a split feasibility problem between
the space $\mathbb{R}^{n}$ and the space {$\mathbb{R}^{p}$}
with the matrix $A$ mapping the first to the latter. The constraints
in $\mathbb{R}^{n}$ are linear (thus convex) but the constraint
\begin{equation}
x^{\ast}\in S:=\{y\in{\mathbb{R}^{p}}\mid\Vert(y-b)_{+}\Vert_{0}\leq\alpha p\}
\end{equation}
is not convex. This makes Problem \ref{prob:lfp-pvc-upper} similar
in structure {to, but not identical with, Problem
\ref{prob:split-feas}}.

Similarly, if the linear inequalities in Problem \ref{prob:lfp-pvc-upper}
are in an opposite direction, i.e., of the form $c_{j}\leq\left\langle a^{j},x\right\rangle ,$
for all $j\in J,$ then the translated problem of LFP+PVC will be
as follows.

\begin{problem} \label{prob:lfp-pvc-lower}\textbf{Translated problem
of LFP+PVC (for LFP with lower bounds). }For the data of Problem \ref{prob:lfp-pvc},
{let $A\in\mathbb{R}^{p\times n}$ be the matrix whose
columns are formed by the vectors $a^{j}$ and let $c\in\mathbb{R}^{p}$
be the column vector consisting of the values $c_{j}$, for all $j\in J$.}
The translated problem of LFP+PVC (for LFP with lower bounds) is:
\begin{gather}
\text{Find an }x^{\ast}\in\mathbb{R}^{n}\text{ such that }(1-\beta)c_{j}\leq\left\langle a^{j},x^{\ast}\right\rangle ,\text{}\label{eq:prob2-trans1}\\
\text{for all }j\in J,\text{ and }\Vert(c-Ax^{\ast})_{+}\Vert_{0}\leq\alpha p.
\end{gather}
\end{problem}

This is also a split feasibility problem between the space $\mathbb{R}^{n}$
and the space {$\mathbb{R}^{p}$} with the matrix
$A$ mapping the first to the latter. The constraints in $\mathbb{R}^{n}$
are linear (thus convex) but the constraint
\begin{equation}
x^{\ast}\in T:=\{y\in{\mathbb{R}^{p}}\mid\Vert(c-y)_{+}\Vert_{0}\leq\alpha{p}\}
\end{equation}
is again not convex.

\subsection{Translated block LFP+PVC}

\label{subsec:tarns-block}

Consider an $m\times n$ matrix $A$ divided into blocks $A_{\ell},$
for $\ell=1,2,\ldots,\Gamma,$ with each block forming an $m_{\ell}\times n$
matrix and $\sum_{_{\ell=1}}^{\Gamma}m_{\ell}=m.$ Further, the blocks
are assumed to give rise to block-wise LFPs of the two kinds; those
with upper bounds, say for $\ell=1,2,\ldots,p,$ and those with lower
bounds, say for $\ell=p+1,p+2,\ldots,p+r$. PVCs are imposed on each
block separately with parameters $\alpha_{\ell}$ and $\beta_{\ell}$,
respectively, for all $\ell=1,2,\ldots,\Gamma.$ The original block-LFP
prior to imposing the PVCs is:
\begin{equation}
\begin{array}{cc}
A_{\ell}x\leq b^{\ell}, & \text{for all }\ell=1,2,\ldots,p,\\
c^{\ell}\leq A_{\ell}x, & \text{for all }\ell=p+1,p+2,\ldots,p+r.
\end{array}
\end{equation}
{Such constraints will be termed ``hard dose constraints" (HDCs).} After imposing the PVCs and translating the systems according to the
principles of Problems \ref{prob:lfp-pvc-upper} and \ref{prob:lfp-pvc-lower}
we obtain the translated problem of LFP+PVC for blocks.

\begin{problem} \label{prob:trans-blocks}\textbf{Translated problem
of LFP+PVC for blocks. }Find an $x^{\ast}\in\mathbb{R}^{n}$ such
that
\begin{equation}
\begin{array}{cc}
A_{\ell}x^{\ast}\leq(1+\beta_{\ell})b^{\ell}, & \text{for all }\ell=1,2,\ldots,p,\\
(1-\beta_{\ell})c^{\ell}\leq A_{\ell}x^{\ast}, & \text{for all }\ell=p+1,p+2,\ldots,p+r,\\
\Vert(A_{\ell}x^{\ast}-b^{\ell})_{+}\Vert_{0}\leq\alpha_{\ell}m_{\ell}, & \text{for all }\ell=1,2,\ldots,p,\\
\Vert(c^{\ell}-A_{\ell}x^{\ast})_{+}\Vert_{0}\leq\alpha_{\ell}m_{\ell}, & \text{for all }\ell=p+1,p+2,\ldots,p+r.
\end{array}
\end{equation}

\end{problem}

This is a split feasibility problem between the space $\mathbb{R}^{n}$
and the space $\mathbb{R}^{m}$ but with a structure similar to Problem
\ref{prob:massad} where, for $\ell=1,2,\ldots,\Gamma,$ each $A_{\ell}$
maps $\mathbb{R}^{n}$ to $\mathbb{R}^{m_{\ell}}.$ Again, it is not
identical with Problem \ref{prob:massad} because here the constraints
in $\mathbb{R}^{m_{\ell}}$, for $\ell=1,2,\ldots,\Gamma,$ are not
convex. . Although Problem \ref{prob:trans-blocks} defines an upper
PVC on exactly $p$ blocks and a lower PVC on exactly $r$ blocks,
we can, without loss of generality, choose to define PVCs only on
a subset of these blocks. For blocks without a PVC, the problem reverts
to a standard LFP. \\

\section{Application to radiation therapy treatment planning\label{sec:application}}

The process of planning a radiotherapy treatment plan involves a physician
providing dose prescriptions which geometrically constrain the distribution
of dose deposited in the patient. Choosing the appropriate nonnegative
weights of many individual beamlet dose kernels to achieve these prescriptions
as best as possible is posed as a split inverse problem. We focus,
for our purposes, on constraining the problem with upper and lower
dose bounds, and dose volume constraints (DVCs), which we more generally
refer to as PVCs in this work. DVCs allow dose levels in a specified
proportion of a structure to fall short of, or exceed, their prescriptions
by a specified amount. They largely serve to allow more flexibility
in the solution space.

Problem \ref{prob:trans-blocks} describes the split feasibility problem
as it applies in the context of radiation therapy treatment planning.
Each block represents a defined geometrical structure in the patient,
which is classified either as an \textit{avoidance structure} or a
\textit{target volume}. An example of an avoidance {structure}
is an organ at risk (OAR), in which one wishes to deposit minimal
dose. An example of a target {structure} is the planning
target volume (PTV), to which a sufficient dose is prescribed to destroy
the tumoural tissue. If there are $p$ avoidance structures, any number
of blocks in $\{1,2,\ldots,p\}$ can have lower PVCs applied. Similarly,
if there are $r$ target volumes then any number of blocks in $\{p+1,p+2,\ldots,p+r\}$
can have an upper PVC applied.

This problem can be formulated as the MOSCFPP described in Problem
\ref{prob:moscfpp} as follows. For the data of Problem \ref{prob:trans-blocks},
define ${\bar{\Gamma}}\subseteq\{1,2,\ldots,p+r\}$ and
for all $i=1,2,\ldots,m_{\ell}$, let
\begin{equation}
C_{\ell}^{i}:=\{x\in\mathbb{R}_{+}^{n}\mid\langle a_{\ell}^{i},x\rangle\leq(1+\beta_{\ell})b_{i}^{\ell}\},
\end{equation}
for all $\ell\in\{1,2,\ldots,p\}$ where $\mathbb{R}_{+}^{n}$ is
the nonnegative orthant, and
\begin{equation}
C_{\ell}^{i}:=\{x\in\mathbb{R}_{+}^{n}\mid(1-\beta_{\ell})c_{i}^{\ell}\leq\langle a_{\ell}^{i},x\rangle\},
\end{equation}
for all $\ell\in\{p+1,p+2,\ldots,p+r\}$. Additionally, let
\begin{equation}
Q_{\ell}:=\{{A_{\ell}x=v\in\mathbb{R}^{m_{\ell}}}\mid\Vert({v}-b^{\ell})_{+}\Vert_{0}\leq\alpha_{\ell}m_{\ell}\},
\end{equation}
for all $\ell\in\{1,2,\ldots,p\}\cap{\bar{\Gamma}}$ and
\begin{equation}
Q_{\ell}:=\{{A_{\ell}x=v\in\mathbb{R}^{m_{\ell}}}\mid\Vert(c^{\ell}-{v})_{+}\Vert_{0}\leq\alpha_{\ell}m_{\ell}\}
\end{equation}
for all $\ell\in\{p+1,p+2,\ldots,p+r\}\cap{\bar{\Gamma}}$. {The above $A_\ell$ are blocks of the original matrix $A$ and we denote by $A_\ell x = v$ the image of the vector $x$ under $A_\ell$.}

\begin{problem} \label{prob:moscfpp_rttp}\textbf{Translated problem
of MOSCFPP for RTTP.}

Let the operators $P_{C_{\ell}^{i}}:\mathbb{R}^{n}\rightarrow\mathbb{R}^{n}$
be orthogonal projections onto $C_{\ell}^{i}$ for all $\ell\in\{1,2,\ldots,p+r\}$
and $i\in\{1,2,\ldots,m_{\ell}\}$, and let $P_{Q_{\ell}}:\mathbb{R}^{m_{\ell}}\rightarrow\mathbb{R}^{m_{\ell}}$
be orthogonal projections onto $Q_{\ell},$ for all $\ell\in\varGamma$.
The translated MOSCFPP for RTTP is:

\begin{align}
\text{Find } & \text{an }x^{\ast}\in\mathbb{R}_{+}^{n}\text{ such that }x^{\ast}\in\bigcap_{\ell=1}^{p+r}\bigcap_{i=1}^{m_{\ell}}\text{\ensuremath{\mathrm{Fix}}}(P_{C_{\ell}^{i}})\text{ and, }\nonumber \\
 & \text{for all }\ell\in\varGamma,\text{ }A_{\ell}x^{\ast}\in\text{\ensuremath{\mathrm{Fix}}}(P_{Q_{\ell}}).
\end{align}

\end{problem}

We seek a solution to Problem \ref{prob:moscfpp_rttp} using our dynamic
string-averaging CQ-method, described in Algorithm \ref{alg:sa-cq}.
We define, for all $\ell\in\Gamma$,
\begin{equation}
V_{\ell}:=Id-\gamma_{\ell}A_{\ell}^{T}(Id-P_{Q_{\ell}})A_{\ell},\label{eq:V_l_PQ}
\end{equation}

where $\gamma_{\ell}\in\left(0,\frac{1}{L_{\ell}}\right)$, $L_{\ell}=\Vert A_{\ell}\Vert^{2}$
{and $A_{\ell}^{T}$ is the transpose of $A_{\ell}$}.

\begin{remark} \label{rem:adaptiveStep} In practical use relaxation
parameters play an important role:

(i) Each projection operator $P_{C_{\ell}^{i}}:\mathbb{R}^{n}\rightarrow\mathbb{R}^{n}$
may be relaxed with a parameter $\lambda_{\ell}\in(0,2)$ defined
on the block $\ell\in\{1,2,\ldots,p+r\}$.

(ii) The relaxation parameters $\lambda_{\ell},$ as defined in (i),
and $\gamma_{\ell}$, as given in (\ref{eq:V_l_PQ}), are permitted
to take any value within their bounds on any iterative step of Algorithm
\ref{alg:sa-cq}. That is, they may depend on (vary with) the iteration
index $k$ and, therefore, be labeled $\lambda_{\ell}^{k}$ and $\gamma_{\ell}^{k}$.

(iii) {The sets $Q_{\ell}$ are nonconvex and if for
a given $\alpha_{\ell}m_{\ell}$ it is nonempty, then it is also closed
and then projection onto $Q_{\ell}$ exists, is not necessarily} unique,
but can be calculated explicitly, see, e.g., \citep[Eq. (24)]{pzcbcs17}.
For properties regarding similar sets see, e.g., \citep[Subsection 6.8.3]{Beck17}.
{A recent work of \citet{hln14}
includes an investigation of these questions, see Section III there.
Answers about the sets $Q_{\ell}$ and projections onto them in the
specific setting related to the radiation therapy treatment planning
problem considered here are not yet available.} \end{remark}

Tracking the percentage of elements in the current iteration of dose
vectors $A_{\ell}x^{k}$ that are violating their constraints enables
one to impose an adaptive version of Algorithm \ref{alg:sa-cq} using
the comments in {Remark \ref{rem:adaptiveStep}}.
If, for example, one block has more PVC violations than LFP (dose
limit constraints) violations then one could choose to alter the relaxation
parameters at the next iteration, $\lambda_{\ell}^{k+1}$ and $\gamma_{\ell}^{k+1}$,
in order to place less emphasis on the projections onto $C_{\ell}^{i}$.

\section{Numerical implementation}\label{sec:numerical}

\subsection{Operator definitions}

In Problem \ref{prob:moscfpp_rttp} we introduced the orthogonal projection operators $P_{C_{\ell}^{i}}$, which act in the space of the pencil beam intensity vector $x$, and $P_{Q_{\ell}}$, which act in the space of the dose vector $A_\ell x$. Here we provide explicit formulae, as examples, for calculating these projections in practice. Given an arbitrary vector $z\in \mathbb{R}^n$ and some $\ell\in\{1,2,\ldots,p+r\}$ and $i\in\{1,2,\ldots,m_{\ell}\}$, if it is the case that $z$ is not in $C_\ell^i$ then it must be projected onto the nearest hyperplane which defines the boundary of $C_\ell^i$. Otherwise, no action is taken. If block $\ell$ represents an avoidance structure ($\ell \in \{ 1,2,\hdots,p \}$) then the projection can be calculated by
\begin{equation}
    P_{C_\ell^i}(z) = \begin{dcases}
        z, & \langle a_\ell^i,z \rangle \leq (1+\beta_\ell)b_i^\ell,
        \\
        z + \lambda_\ell\frac{(1+\beta_\ell)b_i^\ell-\langle a_\ell^i,z \rangle}{\langle a_\ell^i,a_\ell^i \rangle}a_\ell^i, & \langle a_\ell^i,z \rangle > (1+\beta_\ell)b_i^\ell,
    \end{dcases}\label{eqn:projC_avoid}
\end{equation}
where $\lambda_\ell \in (0,2)$ is a user-selected relaxation parameter. Alternatively, if $\ell$ represents a target structure ($\ell \in \{ p+1,p+2,\hdots,p+r \}$) then the projection can be similarly calculated using
\begin{equation}
    P_{C_\ell^i}(z) = \begin{dcases}
        z, & \langle a_\ell^i,z \rangle \geq (1-\beta_\ell)c_i^\ell,
        \\
        z + \lambda_\ell\frac{(1-\beta_\ell)c_i^\ell-\langle a_\ell^i,z \rangle}{\langle a_\ell^i,a_\ell^i \rangle}a_\ell^i, & \langle a_\ell^i,z \rangle < (1-\beta_\ell)c_i^\ell.
    \end{dcases}\label{eqn:projC_target}
\end{equation}
{Note that, since in the above $\lambda_\ell \in (0,2)$ are used, then $P_{C_\ell^i}(z)$ are \emph{relaxed} projections.}

It is of interest to note that in clinical practice a structure may well have both an upper bound and a lower bound placed on the permitted dose. Such cases can be handled by simply defining two blocks for the same structure, one as an avoidance block, to which (\ref{eqn:projC_avoid}) applies, and one as a target block, to which (\ref{eqn:projC_target}) applies.

Projection of the dose vector onto $Q_\ell$ follows a slightly more elaborate procedure. We first define a helper set,
\begin{equation}
\overline{Q}_\ell := \{ y\in \mathbb{R}^{m_\ell} \mid \Vert y_+ \Vert_0 \leq \alpha_\ell m_\ell \},
\end{equation}
and describe the projection onto the set, $P_{\overline{Q}_\ell}$, by the following rules: for an arbitrary vector $y\in\mathbb{R}^{m_\ell}$, first count the number of positive entries, $\Vert y_+ \Vert_0$. If $\Vert y_+ \Vert_0\leq \alpha_\ell m_\ell$ then the vector is in $\overline{Q}_\ell$ and no action is needed; $P_{\overline{Q}_\ell} = Id$, the identity operator. However, if $\Vert y_+ \Vert_0 > \alpha_\ell m_\ell$ then $P_{\overline{Q}_\ell}$ replaces the $\lfloor  (\Vert y_+ \Vert_0 - \alpha_\ell m_\ell ) \rfloor$ smallest positive components of $y$ with zeros and leaves the others unchanged. We can now define $P_{Q_\ell}$ in terms of a projection onto the helper set. Given $v\in\mathbb{R}^{m_\ell}$,
\begin{equation}
P_{Q_\ell}(v) =
\begin{dcases}
P_{\overline{Q}_\ell}(v-b^\ell)+b^\ell, & \ell \in \{ 1,2,\hdots,p \} \cap \overline{\Gamma},
\\
-P_{\overline{Q}_\ell}(c^\ell-v)+c^\ell, & \ell \in \{ p+1,p+2,\hdots,p+r \} \cap \overline{\Gamma}.
\end{dcases}
\end{equation}

{Since the sets $\overline Q_\ell$ are non-convex, the projection is not unique and so it might happen that only one of the possible values has to be chosen. The reader is referred to related results by \citet[Proposition 3.1]{{lz12}}, \citet[Equation (20)]{hln14} and \citet[Page 54]{Schaad10}.}

\subsection{Inverse planning algorithm}

We provide here a practical example of how Algorithm \ref{alg:sa-cq} may be implemented for inverse planning in radiation therapy treatment planning. In this example we initialize each of the beamlet weights to unit intensity, $x^0=(1,1,\hdots,1)^T$, before running through multiple cycles of an iterative scheme that is equivalent to a fully sequential Algorithm \ref{alg:sa-cq} with unit weights, $w_k=1$ for all $k$, in (\ref{as2}). The pseudo-code of this procedure is detailed in Algorithm \ref{alg:sa-cq-rttp}. The two ``for" loop control cycles therein imply that the blocks, $\ell$, may be chosen in any order, without replacement, and so may the voxels, $i$, within each block. Within each cycle, a nonnegativity constraint is enforced after all possible projections have been applied. This sets any unphysical negative entries in the beamlet intensity vector, $x$, to zero. In this example a preset number of cycles are performed before stopping and accepting the final solution. However, one may easily replace this by a tolerance-based stopping criterion.

\setcounter{theorem}{1}
\begin{algo}\label{alg:sa-cq-rttp}\textbf{The dynamic string-averaging CQ-method: A pseudo-code \\ example for RTTP}\\
\begin{algorithm}[H]
\small
\SetAlgoLined
 \textbf{Initialization:} $x^0 = (1,1,\hdots,1)^T$, cycle number $k=1$, choose max cycles $N_\mathrm{cycles}$\;
 \While{$k<N_\mathrm{cycles}$}{
 \For{$\ell \in \{ 1,2,\hdots,p,p+1,\hdots,p+r \}$}{
 	\If{$\ell \in \overline{\Gamma}$}{
  		Choose some $0<\gamma_\ell<2/\Vert A_\ell \Vert^2$\;
  		$x^k\leftarrow x^k-\gamma_\ell A_\ell^T(A_\ell x^k-P_{Q_\ell}(A_\ell x^k))$\;
  	}
  	\For{$i \in \{ 1,2,\hdots,m_\ell \}$}{
  		$x^k\leftarrow P_{C_\ell^i}(x^k)$\;
  	}
  	$x^{k+1}\leftarrow x^k_+$ (enforce nonnegativity constraint)\;
  	$k\leftarrow k+1$\;
 }
 }
\end{algorithm}
\end{algo}

\subsection{Numerical example}

A two-dimensional pseudo-dose grid was created using \emph{MATLAB}, version R2019a (The MathWorks, Inc., 2020). The grid is made of a matrix of dimensions $512 \times 512$ representing 262,144 pixels which altogether comprise an area of dosimetric interest. In a clinical treatment plan this would be the entire patient geometry and the pixels would be replaced by a large number of three-dimensional voxels. Without loss of generality, we assume two spatial dimensions for simplicity. In order to achieve a basic emulation of dose deposited by multiple beamlets, 1,156 Gaussian pseudo-dose kernels were uniformly distributed across the grid. Each kernel had a standard deviation of 20 pixels and an amplitude such that their sum produced a homogeneous intensity map, with a mean value of 50 units. Figure \ref{fig:dosemaps}(a) shows a visualization of the intensity (pseudo-dose) matrix due to a single Gaussian kernel, with each dotted grid point representing the centre of one of the 1,156 kernels. Figure \ref{fig:dosemaps}(b) shows the sum of all contributions. Note that each kernel contributes equally to the sum at this stage, prior to the inverse planning procedure. From this point on, for the proper RTTP context, we shall assume that pixel values directly correspond to ``dose".

\setcounter{table}{0}
\begin{table}[b]
\caption{{\footnotesize Prescription chosen for the two-dimensional numerical example. Pseudo-dose units are arbitrary. $D_{V\%}$ represents the dose that is received by exactly V\% of the structure. $D_\mathrm{max}$ and $D_\mathrm{min}$ represent the maximum and minimum dose constraints, respectively.}}\label{tab:numerical}
\begin{tabular}{lrr}
Structure   & HDCs & DVCs \\ \hline
Avoidance A & $D_\mathrm{max}=25$    & $D_\mathrm{10\%}\leq 20$    \\ \hline
Avoidance B & $D_\mathrm{max}=40$ 	 & $D_\mathrm{25\%}\leq 30$    \\ \hline
Target      & $D_\mathrm{min}=60$    & $D_\mathrm{90\%}\geq 65$    \\
            & $D_\mathrm{max}=70$    &    \\ \hline
\end{tabular}
\end{table}

\begin{figure}[b!]
\makebox[\textwidth][c]{\includegraphics[width=0.95\columnwidth]{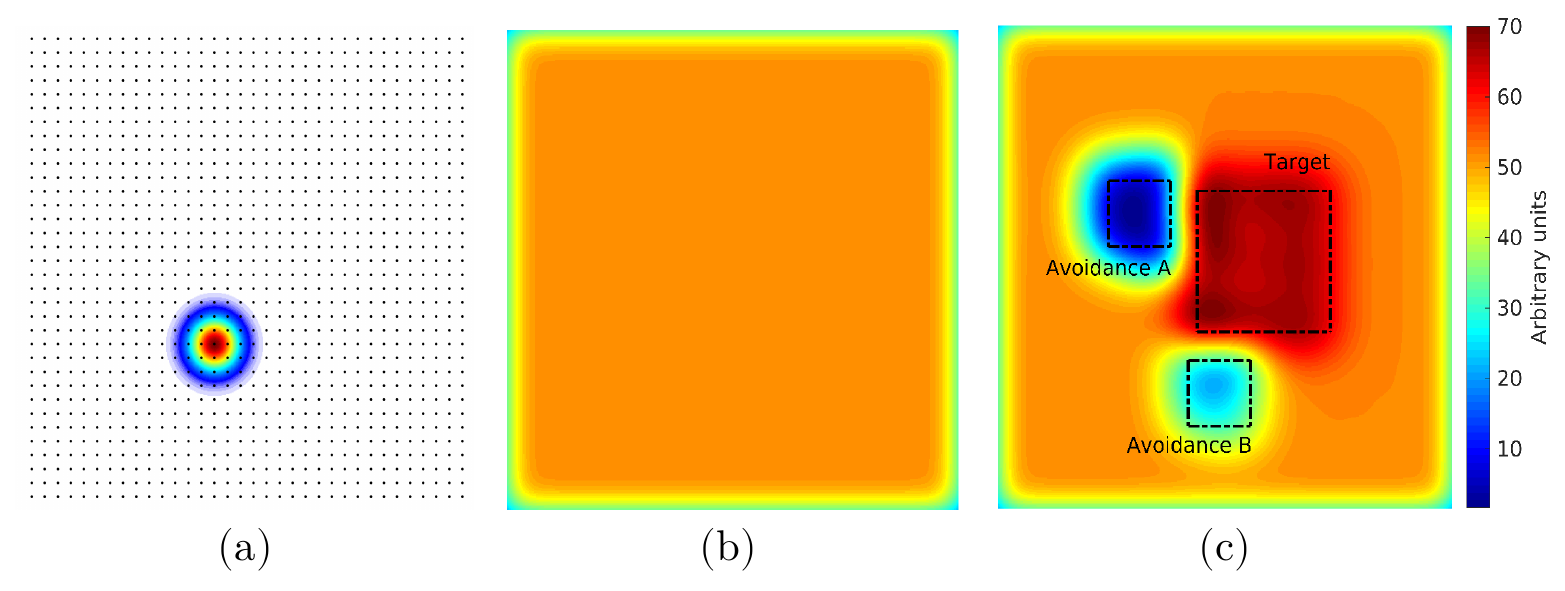}}
\caption{{\footnotesize (a) A single Gaussian pseudo-dose kernel contribution shown at one grid point. (b) Homogeneous pseudo-dose of 50 units formed by superimposing all 1,156 Gaussian contributions. (c) Optimized pseudo-dose map showing the structures for which the prescription in Table \ref{tab:numerical} was applied.}}\label{fig:dosemaps}
\end{figure}

We have thus far introduced 1,156 different matrices of dimensions $512 \times 512$. In order to form a \emph{dose-influence} matrix, $A$, for use in inverse planning, each matrix is collapsed to a single column vector with 262,144 entries, making sure to keep track of which indices corresponded to which spatial positions in the dose grid. The matrix $A$ is formed by all column vectors and therefore has $262,144$ rows and $1156$ columns.

A prescription composed of four hard dose constraints (HDCs), for minimum and maximum dose bounds, and three DVCs, shown in Table \ref{tab:numerical}, was applied to three arbitrarily defined disjoint square regions. DVCs in Table \ref{tab:numerical} are written in the standard notation, $D_{V\%}$, which is the dose that is received by exactly $V$\% of the structure. In the framework of this paper, an upper DVC on block $\ell$ is equivalent to writing $D_{100\alpha_\ell\%}\leq b^\ell$ and a lower DVC is equivalent to $D_{100\alpha_\ell\%}\geq c^\ell$. $D_\mathrm{max}$ and $D_\mathrm{min}$ represent the maximum and minimum dose constraints, respectively. The three defined square regions can be seen overlaying the dose solution in Figure \ref{fig:dosemaps}(c). These consist of two avoidance regions, ``Avoidance A" and ``Avoidance B", and one target region, ``Target". The column indices of the matrix $A$ corresponding to pixels inside the boundary of these regions can be used to form submatrices, $A_1$, $A_2$ and $A_3$, respectively.

\begin{figure}[t!]
\makebox[\textwidth][c]{\includegraphics[width=0.52\columnwidth]{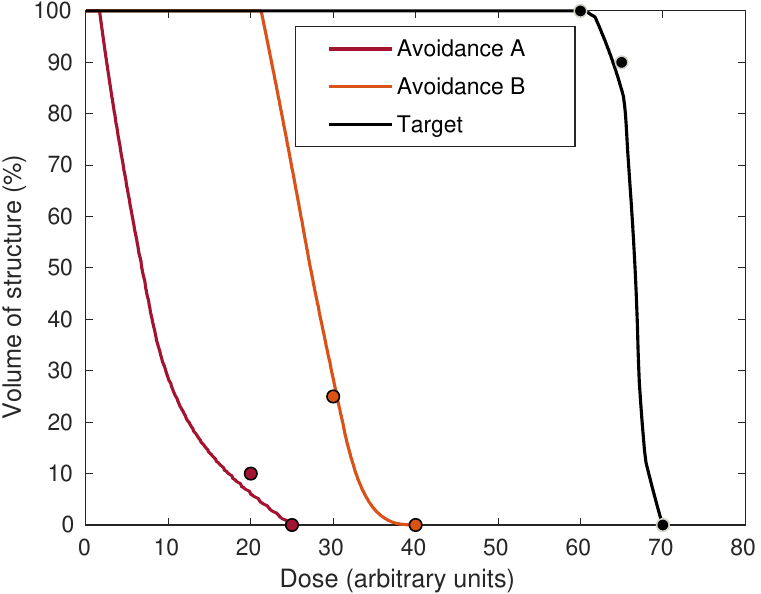}}
\caption{{\footnotesize Cumulative dose-volume histogram (DVH) showing the percentage of each structure that has received a certain dose. HDC and DVC prescriptions are shown as filled circles.}}\label{fig:DVH}
\end{figure}

We now have a framework in which Algorithm \ref{alg:sa-cq-rttp} can be applied. We have $A_\ell$ for $\ell \in \{ 1,2,3 \}$ with $p=2$ and $r=1$, and we have $x^0=(1,1,\hdots,1)^T$ with 1,156 entries. In this particular case, both a lower and upper bound on the dose have been prescribed for the ``Target" structure. Therefore, we will actually use $A_\ell$ for $\ell \in \{ 1,2,3,4 \}$, where $A_4=A_3$ and $\ell=3$ corresponds to the minimum dose constraint while $\ell=4$ corresponds to the maximum dose constraint.

Algorithm \ref{alg:sa-cq-rttp} was applied to the problem described above in order to reduce the dose in the avoidance structures and elevate it in the target structure, according to the prescription in Table \ref{tab:numerical}. Forty cycles ($N_\mathrm{cycles}=40$) were used and the relaxation parameters, $\lambda_\ell$ and $\gamma_\ell$, were set to their midrange values, $1$ and $1/\Vert|A_\ell\Vert^2$, respectively. Explicitly, $\lambda_1=\lambda_2=\lambda_3=\lambda_4=1$, $\gamma_1 = 1.546\times 10^{-6}$, $\gamma_2 = 1.545\times 10^{-6}$, and $\gamma_3 = \gamma_4 = 1.030\times 10^{-6}$. Figure \ref{fig:dosemaps}(c) shows a visualisation of the dose solution following the algorithmic procedure. It is common in the clinic to evaluate plans using their dose-volume histogram (DVH), which shows the percentage of each structure that has received a certain dose. Figure \ref{fig:DVH} shows a suitable DVH for this plan, with all prescriptions being approximately met. General convergence to the solution is indicated by a decrease in the total number of pixels violating the constraint imposed upon them, shown in the log-loss plot in Figure \ref{fig:total_loss}. Further, log-loss plots for all four types of constraints (minimum dose, maximum dose, lower DVC and upper DVC) are displayed in Figure \ref{fig:loss_detailed}. Again, these all show a general decrease in the number of violations and, therefore, indicate that the solution gradually improves as the number of cycles increases.

\begin{figure}[t!]
\makebox[\textwidth][c]{\includegraphics[width=0.52\columnwidth]{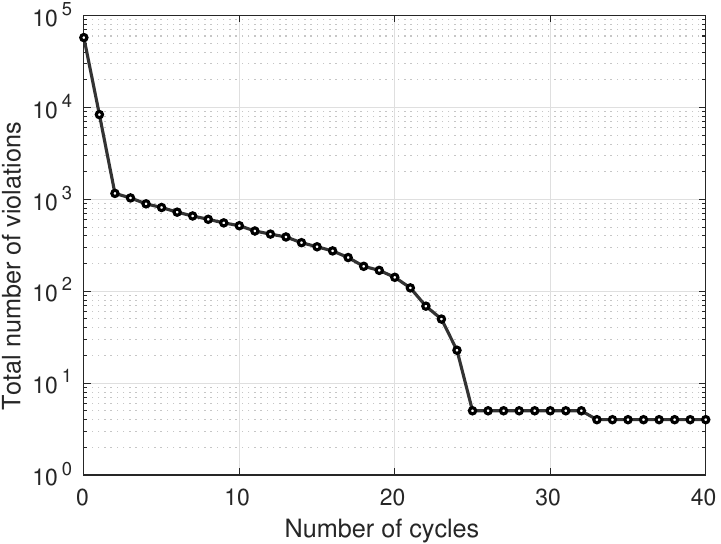}}
\caption{{\footnotesize Number of total violations as a function of number of the algorithmic cycles. A decrease indicates improvement in meeting the prescription.}}\label{fig:total_loss}
\end{figure}

\begin{figure}[t!]
\makebox[\textwidth][c]{\includegraphics[width=0.8\columnwidth]{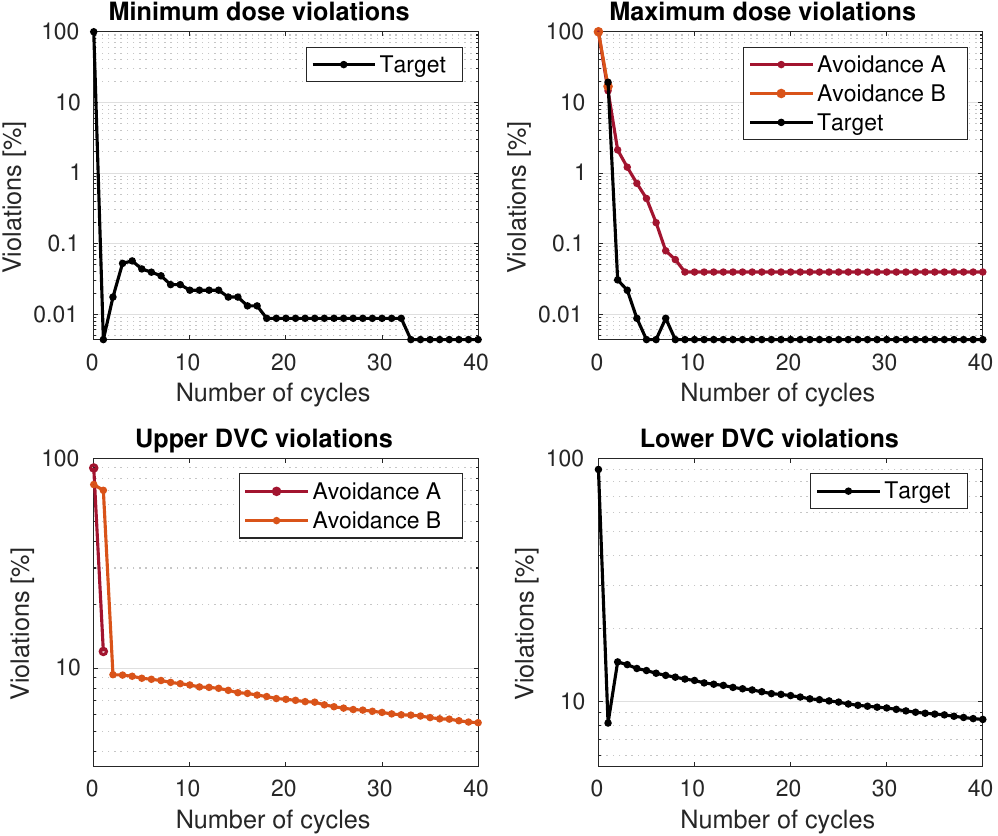}}
\caption{{\footnotesize Percentage of violations as a function of the number of algorithmic cycles, shown separately for HDCs (minimum and maximum doses) and DVCs. An upper DVC is that which is applied to an avoidance structure while a lower DVC is that which is applied to a target structure.}}\label{fig:loss_detailed}
\end{figure}

As mentioned in Section \ref{subsec:structure}, more extensive analysis in the context of radiation therapy treatment planning and, in particular, medical physics is necessary in order to justify the use of the proposed dynamic string-averaging CQ-method. This work is ongoing and will be published in an appropriate medical physics journal.

\section{Conclusions}

We introduced a new split feasibility problem called ``the multiple-operator
split common fixed point problem'' (MOSCFPP). This problem generalizes
some well-known split feasibility problems such as the split convex
feasibility problem, the split common fixed point problem and more.
Following the recent work of \citet{pzcbcs17}, and motivated from
the field of radiation therapy treatment planning, the MOSCFPP involves
additional so-called Percentage Violation Constraints (PVCs) that
give rise to non-convex constraints sets. A new string-averaging CQ
method for solving the problem is introduced, which provides the user
great flexibility in the weighting and order in which the projections
onto the individual sets are executed.


\printunsrtglossary[title=List of abbreviations]


\section*{Acknowledgments}

We thank Scott Penfold, Reinhard Schulte and Frank Van den Heuvel
for their help and encouragement of our work on this project. {We
are grateful to the reviewers for their constructive and helpful comments
on the previous version of this paper.} This work was supported by
Cancer Research UK, grant number C2195/A25197, through a CRUK Oxford
Centre DPhil Prize Studentship and by the ISF-NSFC joint research
program grant No. 2874/19. All authors contributed equally to the
writing of this paper. All authors read and approved the final manuscript.

\bibliographystyle{itor} 

\expandafter\ifx\csname natexlab\endcsname\relax 
\global\long\def\natexlab#1{#1}%
\fi \expandafter\ifx\csname url\endcsname\relax 
\global\long\def\url#1{\texttt{#1}}%
\fi \expandafter\ifx\csname urlprefix\endcsname\relax 
\global\long\def\urlprefix{URL }%
\fi \providecommand{\eprint}[2][]{\url{#2}} 

\end{document}